\newcommand{\co}{\colon \thinspace}
\newcommand{\onto}{\twoheadrightarrow}
\newtheorem{thm}{Theorem}[section]
\newtheorem{cor}[thm]{Corollary}
\newtheorem*{thm1}{Theorem}
\newtheorem{prop}{Proposition}[section]
\newtheorem{lemma}[prop]{Lemma}
\newtheorem{propo}[thm]{Proposition}
\theoremstyle{definition}
\newtheorem*{dfn}{Definition}
\newtheorem*{fact}{Fact}
\theoremstyle{remark}
\newtheorem*{remark}{Remark}
\begin{document}

\title{On the doubled tetrus}
\author{Jason DeBlois}
\address{Department of MSCS, University of Illinois at Chicago}
\email{jdeblois@math.uic.edu}
\thanks{Partially supported by NSF grant DMS-0703749}
\begin{abstract}  The ``tetrus'' is a member of a family of hyperbolic $3$-manifolds with totally geodesic boundary, described by Paoluzzi-Zimmerman, which also contains W.P. Thurston's ``tripus.''  Each member of this family has a branched cover to $B^3$, branched over a certain tangle $T$.  This map on the tripus has degree three, and on the tetrus degree four.  We describe a cover of the double of the tetrus, itself a double across a closed surface, which fibers over the circle.
\end{abstract}
\maketitle

This paper describes some features of a fibered cover of a certain $3$-manifold, related to a family of compact hyperbolic 3-manifolds $M_{n,k}$ with totally geodesic boundary defined by Paoluzzi-Zimmermann \cite{ZP}.  A well known member of this family is $M_{3,1}$, Thurston's ``tripus'' \cite[Ch. 3]{Th1}.  Here we consider the ``tetrus" $M_{4,1}$ (thanks to Richard Kent for naming suggestions).  

If $M$ is an oriented manifold with boundary, let $\overline{M}$ be a copy of $M$ with orientation reversed, and define the \textit{double} of $M$ to be $DM = M \cup_{\partial} \overline{M},$  where the gluing isometry $\partial M \to \partial \overline{M}$ is induced by the identity map.  

\begin{thm} \label{fibered}  There is a cover $p \co D\widetilde{M} \to DM_{4,1}$ of degree $6$, where $D\widetilde{M}$ is a double across the closed surface $p^{-1}(\partial M_{4,1})$ of genus $13$, and $D\widetilde{M}$ fibers over $S^1$ with fiber $\widetilde{F}$, a closed surface of genus $19$.  
\end{thm}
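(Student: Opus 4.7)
The plan is to exploit the degree-4 branched cover $M_{4,1} \to B^3$ over the tangle $T$: doubling yields a degree-4 branched cover $\pi \co DM_{4,1} \to S^3$ over the link $L := DT$. First I would identify $L$ explicitly — as the double of a two-strand tangle, $L$ has at most two components, and plausible candidates (small 2-bridge links or closures of small braids) are fibered — and fix a fibration $\phi \co S^3 \setminus L \to S^1$ with fiber $F$. The cohomology class in $H^1(S^3 \setminus L)$ represented by $\phi$ is what I would pull back to the covers of $DM_{4,1}$ to build the target fibration.

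Next I would construct a degree-6 cover $\widetilde{M} \to M_{4,1}$ by choosing a transitive homomorphism $\rho \co \pi_1(M_{4,1}) \to S_6$. Two constraints guide the choice. First, $\rho$ restricted to $\pi_1(\partial M_{4,1})$ must act transitively, so that $\partial \widetilde{M}$ is connected; since $\partial M_{4,1}$ has genus three (by Riemann-Hurwitz on the boundary cover $\partial M_{4,1} \to S^2$, totally ramified over four points), a connected six-fold cover has genus $13$, matching the statement. Second, the monodromy of $\rho$ around each component of $L$ must combine with the degree-4 branching of $\pi$ so that pulling $F$ back through $\pi \circ p$ yields a connected closed surface $\widetilde{F}$; the requirement $\chi(\widetilde{F}) = -36$ (genus $19$) then serves as a consistency check. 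Doubling $\widetilde{M}$ across its connected boundary realizes $p \co D\widetilde{M} \to DM_{4,1}$.

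The hardest step will be finding a $\rho$ meeting both constraints simultaneously. In practice I would start from an explicit polyhedral or handle decomposition of $M_{4,1}$ to obtain a presentation of $\pi_1(M_{4,1})$ and to compute the monodromy of $\pi$ on generators; then I would search the finite list of transitive representations into $S_6$ for one matching the boundary and link-component requirements. Once $\rho$ is in hand, Stallings's fibering criterion, applied to the pulled-back cohomology class in $H^1(D\widetilde{M})$, certifies that $D\widetilde{M}$ genuinely fibers over $S^1$, with $\widetilde{F}$ arising as the preimage of a regular value.
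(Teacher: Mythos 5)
Your proposal breaks down at its first step, and the break is conceptual rather than computational. Proposition~\ref{technical_branched} (Thurston's observation) requires a fibering of a \emph{cover} $M'$ of the base $M$ that is \emph{transverse} to the preimage of the branch locus $L$ --- that is, each fiber meets the link preimage transversely in points. A fibration $\phi \co S^3 \setminus L \to S^1$ is a Seifert-surface fibration, in which the link is the \emph{boundary} of the fiber, not transverse to it, and since $H^1(S^3) = 0$ this fibration cannot be regarded as a fibration of $S^3$ transverse to $L$ at all. Pulling back $[\phi]$ through the branched cover also does not immediately work: the preimage of a meridian $\mu$ of $L$ maps to $\mu$ with the degree of branching, so the pulled-back class evaluates nontrivially on it and does not extend across the filling disks without further correction. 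Moreover, the paper identifies $L$ as the Montesinos link $L(1/3,1/2,-1/2,-1/3)$, which is not a 2-bridge link and whose complement is not verified (or needed) to be fibered; your plan has no fallback if it is not.

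The paper's actual route sidesteps all of this. It factors $DM_{4,1} \to S^3$ through the \emph{double} branched cover $DM_2 \to S^3$, which is the double of the trefoil exterior and hence Seifert fibered. Lemma~\ref{hor sfc} exhibits $M_2$ as a once-punctured-torus bundle with periodic monodromy $\sigma$ of order six, so there is a six-fold cover $M' \cong F \times S^1$ that is \emph{trivially} fibered. The transverse fibration of $DM'$ is then manufactured by the spinning construction of Proposition~\ref{transverse}, which tilts the horizontal product fibering along annuli chosen so that the algebraic and geometric intersection numbers with $\pi((q_2 \circ p')^{-1}(L))$ agree. The cover $D\widetilde{M}$ is produced explicitly via Lemma~\ref{here's yer cover} by completing the diamond of covers (intersecting $\Gamma_{4,1}$ and $\Gamma'$ inside $\pi(T)$ and then doubling through Proposition~\ref{doubled magic cover}), not by searching for a transitive homomorphism to $S_6$; the degree-$6$ and genus-$13$ statements drop out of index calculations on these subgroups in Lemma~\ref{magic cover}. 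The genus-$19$ figure is computed from the explicit spun fiber by counting its $32$ branch points, not left as a consistency check. Without the intermediate Seifert fibered manifold and the spinning step, your plan has no mechanism to produce \emph{any} fibration of any cover of $S^3$ transverse to a preimage of $L$, which is exactly the input Proposition~\ref{technical_branched} needs.
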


The manifold $D\widetilde{M}$ above is the first hyperbolic $3$-manifold which we know to be both fibered and a double across a closed surface.  Non-hyperbolic fibered doubles are easily constructed, for instance by doubling the exterior of a fibered knot across its boundary torus, but producing a fibered hyperbolic double is more subtle problem.  In such a manifold the doubling surface --- necessarily with genus at least 2 --- does not itself admit a fibering and must thus have points of tangency with any fibering, which in particular cannot be invariant under the doubling involution.

The strategy of proof for Theorem \ref{fibered} is motivated by the fact that the doubled tetrus branched covers $S^3$, branched over the link $L$ of Figure \ref{doubledtangle}, which is the Montesinos link $L(1/3,1/2,-1/2,-1/3)$.  Thurston observed that given a branched cover $M_n \to M$, branched over a link $L$, virtual fiberings of $M$ transverse to the preimage of $L$ may be pulled back to virtual fiberings of $M_n$.   This is also used in \cite{AR}, which is where we encountered it, and \cite{ABZ}.  We record a  version of this observation as Proposition \ref{technical_branched}, and apply it here with Proposition \ref{transverse} to prove Theorem \ref{fibered}.  

When $M'$ is homeomorphic to the Cartesian product of a closed surface $F$ with $S^1$, and $L$ is a link contained in a disjoint union of fibers, Proposition \ref{transverse} produces new fiberings of $M'$ transverse to $L$ under certain circumstances.  We state and prove Proposition \ref{transverse} in Section \ref{sec:Thurston}, along with Proposition \ref{technical_branched}.  In Section \ref{sec:tetrus} we describe the manifolds $M_{n,k}$ constructed in \cite{ZP}, and their branched covers to the $3$-ball.  The branched cover $DM_{4,1} \to S^3$, which results from doubling, factors through a double branched cover $DM_2 \rightarrow S^3$, branched over $L$.  Such a manifold is well known to have the structure of a Seifert fibered space (\cite{Mont}, cf. \cite{BuZi}).  In Section \ref{sec:fibering}, we describe the Seifert fibered structure on $DM_2$ and construct a cover $p' \co DM' \rightarrow DM_2$ which satisfies the hypotheses of Proposition \ref{transverse}.  An application of Proposition \ref{technical_branched} completes the proof of Theorem \ref{fibered}.

\begin{figure}%[ht]
\begin{center}
\input{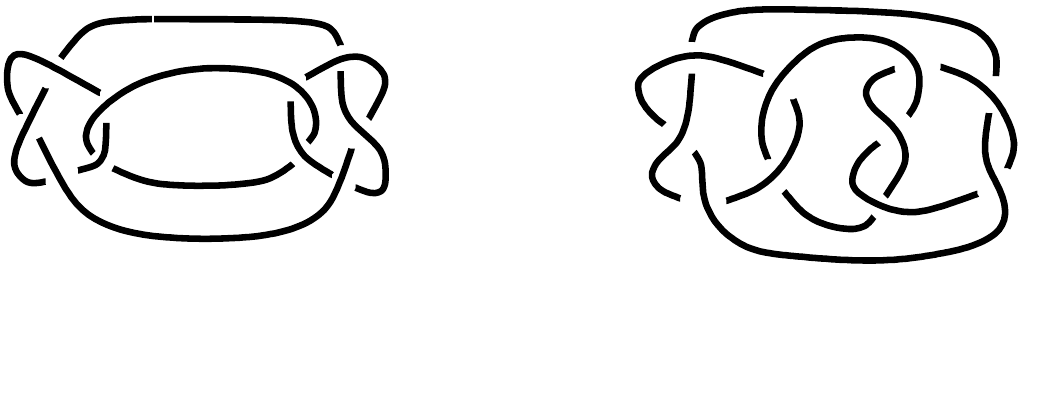_t.txt}
\end{center}
\caption{$DM_{n,k}$ and $D_{\phi}M_{n,k}$ $n$--fold cover $S^3$, branched over $L$ and $L_{\phi}$.}
\label{doubledtangle}
\end{figure}

Steven Boyer and Xingru Zhang previously obtained results about virtual fiberings of Montesinos links and their branched covers (see \cite{ABZ}) using a similar strategy.  These imply that the doubled tetrus is virtually fibered; in particular, \cite[Theorem 1.7]{ABZ} applies more generally than our independently discovered Proposition \ref{transverse}.  An advantage of Proposition \ref{transverse}, when it applies, is that it produces an explicit description of a fiber surface, which we use in Theorem \ref{fibered} to obtain the extra information about the genus of $\widetilde{F}$.

It is well known that $M_{n,k}$ has a minimal--genus Heegaard splitting of genus $n$, obtained by attaching a single one--handle to $\partial M_{n,k}$.  (Ushijima classified such splittings in \cite[Theorem 2.8]{Ush}.)  Thus the tetrus $M_{4,1}$ has a Heegaard splitting with genus 4, yielding an amalgamated Heegaard splitting of $DM_{4,1}$ with genus 5.  The preimage in $D\widetilde{M}$ gives the following corollary of Theorem \ref{fibered}.

\begin{cor}  $D\widetilde{M}$ has a weakly reducible Heegaard splitting of genus $25$ associated to $p^{-1}(\partial M_{4,1})$, and one of genus $39$ associated to $\widetilde{F}$.  \end{cor}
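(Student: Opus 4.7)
My plan is to build each splitting by transporting a standard construction through the cover $p$ or the fibration on $D\widetilde{M}$.

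\emph{Genus 25.} Start from the genus-4 splitting $M_{4,1} = V \cup_{S_4} W$ described just above: $W$ is a handlebody of genus 4, and $V$ is the compression body obtained from $\partial M_{4,1} \times I$ by attaching one 1-handle, so $\partial_-V = \partial M_{4,1}$ has genus 3 and $\partial_+V = S_4$ has genus 4. Let $\widetilde{M}$ denote the preimage under $p$ of one copy of $M_{4,1}$ in $DM_{4,1}$, bounded by $\widetilde{F}_{13} := p^{-1}(\partial M_{4,1})$; then $p|_{\widetilde{M}} \co \widetilde{M} \to M_{4,1}$ is a degree-6 cover, and I lift the splitting along it. Since the 1-handle of $V$ is simply connected, its preimage is a disjoint union of six 1-handles attached to $\widetilde{F}_{13} \times I$; this makes $\widetilde{V} := p^{-1}(V) \cap \widetilde{M}$ a compression body with $\partial_-\widetilde{V} = \widetilde{F}_{13}$ of genus 13 and $\partial_+\widetilde{V}$ of genus $13 + 6 = 19$. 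The complementary piece $\widetilde{W} := p^{-1}(W) \cap \widetilde{M}$ is bounded by the connected surface $\partial_+\widetilde{V}$, hence is connected; being a connected cover of the handlebody $W$, it is irreducible with free fundamental group of rank $1 + 6(4-1) = 19$ by Nielsen--Schreier, so it is a handlebody of genus 19. Thus $\widetilde{M}$, and similarly $\overline{\widetilde{M}}$, carries a genus-19 Heegaard splitting with lower boundary $\widetilde{F}_{13}$; amalgamating them across $\widetilde{F}_{13}$ yields a Heegaard splitting of $D\widetilde{M}$ of genus $19 + 19 - 13 = 25$. Any amalgamated Heegaard splitting along a nontrivial compression body is weakly reducible, so this one is.

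\emph{Genus 39.} Apply the classical genus-$(2g+1)$ Heegaard construction for a surface bundle over $S^1$, with fiber genus $g = 19$. Cut $D\widetilde{M}$ along two parallel fibers $\widetilde{F}_0, \widetilde{F}_{1/2}$ into pieces $M_1, M_2$, each homeomorphic to $\widetilde{F} \times I$. In each $M_i$ pick a vertical arc $\alpha_i$ joining $\widetilde{F}_0$ to $\widetilde{F}_{1/2}$, with the four endpoints on these fibers pairwise distinct. Then $H_i := M_i \setminus N(\alpha_i) \cong (\widetilde{F} \setminus D^2) \times I$ is a handlebody of genus $2 \cdot 19 = 38$. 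Set $U_1 = H_1 \cup N(\alpha_2)$ and $U_2 = H_2 \cup N(\alpha_1)$: each $N(\alpha_j)$ meets the adjacent $H_i$ along two disks on the cut fibers, so each $U_i$ is $H_i$ with one 1-handle attached---a handlebody of genus $2 \cdot 19 + 1 = 39$. Their common boundary, obtained from $\widetilde{F}_0 \cup \widetilde{F}_{1/2}$ by removing four disks and adding the vertical annuli $\partial N(\alpha_i) \setminus (\widetilde{F}_0 \cup \widetilde{F}_{1/2})$, is the desired Heegaard surface of genus 39.

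The main technical point lies in the first part --- identifying $\widetilde{V}$ and $\widetilde{W}$ as a compression body and handlebody respectively --- after which the amalgamation formula and weak reducibility are classical, as is the fiber construction of the second part.
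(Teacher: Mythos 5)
The paper does not give an explicit proof of this corollary: it simply states, after exhibiting the genus-$5$ amalgamated splitting of $DM_{4,1}$, that ``the preimage in $D\widetilde{M}$'' gives the result. Your write-up supplies exactly the intended details and is correct: for the genus-$25$ splitting you lift the genus-$4$ splitting of $M_{4,1}$ to a genus-$19$ splitting of $\widetilde{M}$ (six lifted $1$-handles attached to the genus-$13$ boundary give the compression body; the Nielsen--Schreier rank count identifies the complementary handlebody) and then amalgamate across $p^{-1}(\partial M_{4,1})$; for the genus-$39$ splitting you use the standard genus-$(2g+1)$ construction associated to a fibering with $g=19$. Lifting and then amalgamating, as you do, is the same as amalgamating to genus $5$ and then lifting, but it sidesteps having to verify directly that the preimage of the genus-$5$ Heegaard surface is connected. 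One small omission: the corollary's ``and one of genus $39$'' plausibly carries the adjective ``weakly reducible,'' which you did not check; it holds because the cocore disks of the two lifted $1$-handles $N(\alpha_1)\subset U_2$ and $N(\alpha_2)\subset U_1$ are disjoint compressing disks on opposite sides of the genus-$39$ surface.
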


It would be interesting to know the minimal genus of a fiber surface for $D\widetilde{M}$, for the above discussion shows that if this is greater than twelve, the minimal genus Heegaard splitting of $D\widetilde{M}$ is not associated to a fibering.  Such examples are nongeneric, according to work of Souto \cite[Theorem 6.2]{Souto} and Biringer \cite{Biri}.  See also \cite{BDT} for a survey of results about degeneration.

The \textit{twisted double} $D_{\phi}M_{n,k}$ in the proposition is obtained by gluing $M_{n,k}$ to its mirror image via an isometry $\phi$ of the boundary, the lift to $M_{n,k}$ of the mutation producing the link $L_{\phi}$ of Figure \ref{doubledtangle}.  Our original motivation for proving Theorem \ref{fibered} was Proposition \ref{arithmetic} below, which describes arithmeticity of the doubles and twisted doubles for $n=3$ and $4$.

\begin{propo}  The doubles $DM_{3,k}$ ($k=0$ or 1) and $DM_{4,k}$ ($k=1$ or 3) are nonarithmetic.  On the other hand, $D_{\phi}M_{3,k}$ and $D_{\phi}M_{4,k}$ are arithmetic.  \label{arithmetic} \end{propo}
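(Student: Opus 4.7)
The proof proceeds by explicit computation of the invariant trace field of each manifold and application of the standard arithmeticity criterion for cocompact Kleinian groups. The plan is to exhibit discrete faithful representations $\rho \co \pi_1(DM_{n,k}) \to \mathrm{PSL}_2(\mathbb{C})$ and $\rho_\phi \co \pi_1(D_\phi M_{n,k}) \to \mathrm{PSL}_2(\mathbb{C})$, compute the invariant trace fields $k\Gamma$ and $k\Gamma_\phi$, and in each case check the Maclachlan--Reid criterion: a cocompact lattice $\Gamma < \mathrm{PSL}_2(\mathbb{C})$ is arithmetic if and only if $k\Gamma$ has exactly one complex place and no real places, every invariant trace is an algebraic integer, and the invariant quaternion algebra is ramified at every real place of $k\Gamma$ (trivially so when there are none).

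To construct the representations I would exploit the branched cover structure described in the excerpt: $DM_{n,k}$ is an $n$--fold cyclic branched cover of $S^3$ over the Montesinos link $L = L(1/3,1/2,-1/2,-1/3)$, and $D_\phi M_{n,k}$ is the analogous cover branched over the mutant $L_\phi$. A discrete faithful representation of $\pi_1(S^3 \setminus L)$ (respectively $\pi_1(S^3 \setminus L_\phi)$) lifts to one of $\pi_1(DM_{n,k})$ (respectively $\pi_1(D_\phi M_{n,k})$) after imposing the meridianal relations coming from the branching. For these specific Montesinos links the representations, and hence the invariant trace fields, can be determined algebraically or via a symbolic/numerical package such as Snap. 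The mutation $\phi$ acts on the level of the tangle in a way which translates to an explicit automorphism of the character variety, and the key point will be to compare $k\Gamma$ and $k\Gamma_\phi$ under this operation.

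For the nonarithmetic cases, I would expect to exhibit either an additional complex or real place of $k\Gamma$, or an invariant trace that fails to be an algebraic integer, obstructing arithmeticity; for the twisted doubles I expect $k\Gamma_\phi$ to collapse to an imaginary quadratic field (so that $D_\phi M_{n,k}$ is commensurable with a Bianchi orbifold) and the remaining hypotheses to follow. The main obstacle is the algebraic bookkeeping: pinning down how mutation by $\phi$ modifies the invariant trace field in precisely the right way to both simplify the field and restore integrality of traces. In the absence of a clean a priori argument leveraging the Seifert fibered structure of $DM_2 \to S^3$ to descend to $DM_{n,k}$ and $D_\phi M_{n,k}$, this step may require the more hands-on identification of each twisted double with an explicit arithmetic manifold — for instance as a congruence cover of a Bianchi orbifold — to confirm arithmeticity beyond the purely computational check, while the nonarithmetic verdicts rely on producing a single witness (an extra Galois embedding, or a non-integral trace) in each of the four listed cases.
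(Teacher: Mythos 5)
The paper offers no separate argument for this proposition beyond the remark, immediately following its statement, that it ``may be verified using Snappea and Snap, together with the descriptions of $DM_{n,k}$ and $D_{\phi}(M_{n,k})$ as $n$--fold covers of $L$ and $L_{\phi}$,'' so your plan --- construct the holonomy representations from the branched-cover descriptions, compute the invariant trace field and invariant quaternion algebra, and test the Maclachlan--Reid arithmeticity criterion using Snap --- is exactly the verification the paper defers to. Your conjectured outcome (that $k\Gamma_\phi$ is imaginary quadratic and the twisted doubles are Bianchi-commensurable, while the plain doubles fail integrality or acquire an extra place) is not asserted in the source and would simply be read off from the same computation, but this does not change the fact that the method matches.
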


This may be verified using Snappea and Snap, together with the descriptions of $DM_{n,k}$ and $D_{\phi}(M_{n,k})$ as $n$--fold covers of $L$ and $L_{\phi}$ above.  For each $n$, $DM_{n,k}$ and $D_{\phi}M_{n,k}$ contain a totally geodesic surface identical to the totally geodesic boundary of $M_{n,k}$.  In the cases $n=3$ and $4$, it follows from arithmeticity of the twisted doubles that this surface is arithmetic.  (This can also be discerned directly from a polyhedral decomposition.)  Using Proposition 4.1 of \cite{LLR}, one obtains the following.

\begin{cor}  The doubled tetrus has a nested, cofinal family of regular covers with respect to which it has property $\tau$.  \label{LLR} \end{cor}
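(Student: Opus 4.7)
The plan is to apply Proposition 4.1 of \cite{LLR} directly. That proposition takes as input a closed hyperbolic $3$-manifold containing an immersed arithmetic totally geodesic surface and produces as output a nested, cofinal tower of finite regular covers with respect to which the manifold has property $\tau$. So the task reduces to exhibiting such a surface in $DM_{4,1}$.

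First I would identify the surface. By construction, $DM_{4,1} = M_{4,1}\cup_\partial \overline{M_{4,1}}$ is glued along $\partial M_{4,1}$ by the identity, which is an isometry since $\partial M_{4,1}$ is totally geodesic in $M_{4,1}$. The image of $\partial M_{4,1}$ is therefore a closed embedded totally geodesic surface $\Sigma\subset DM_{4,1}$.

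Second, I would verify that $\Sigma$ is arithmetic as a Fuchsian orbifold. This is the content of the paragraph preceding the statement, which I would expand as follows. By Proposition \ref{arithmetic}, the twisted double $D_\phi M_{4,1}$ is arithmetic. Since $\phi$ is an isometry of $\partial M_{4,1}$, the doubling surface in $D_\phi M_{4,1}$ is isometric to $\Sigma$ and embedded as a totally geodesic surface. An embedded totally geodesic surface in an arithmetic hyperbolic $3$-manifold has arithmetic uniformizing group, because its Fuchsian subgroup of the ambient Kleinian group meets an arithmetic lattice in a finite-index subgroup; hence $\Sigma$ is arithmetic.

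With the arithmetic totally geodesic surface $\Sigma\subset DM_{4,1}$ in hand, Proposition 4.1 of \cite{LLR} applies and produces the tower of regular covers with property $\tau$, giving the corollary. The only thing to check carefully — and the closest thing to an obstacle — is that the precise hypotheses of \cite[Prop.~4.1]{LLR} (for instance, connectedness or orientability of $\Sigma$, and that the congruence tower on the Fuchsian side pulls back to a cofinal tower of regular covers of $DM_{4,1}$) are satisfied; these are standard verifications for the closed orientable hyperbolic $3$-manifold $DM_{4,1}$ and the connected orientable surface $\Sigma$.
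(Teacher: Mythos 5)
Your proposal is correct and follows essentially the same route as the paper: identify the totally geodesic doubling surface $\Sigma \cong \partial M_{4,1}$ in $DM_{4,1}$, transport it into the arithmetic twisted double $D_\phi M_{4,1}$ to conclude $\Sigma$ is arithmetic, and then invoke Proposition 4.1 of \cite{LLR}. One small caveat on your arithmeticity step: the clean statement to cite is that a finite-covolume (here, cocompact) Fuchsian subgroup of an arithmetic Kleinian group is itself arithmetic, rather than the vaguer ``meets an arithmetic lattice in a finite-index subgroup''; this is a standard fact but worth stating precisely since it is the crux of the verification.
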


Work of Abert-Nikolov (\cite{AN}) concerning rank and Heegaard genus has drawn attention to virtually fibered manifolds which satisfy the conclusion of Corollary \ref{LLR}.  The doubled tetrus is the first closed nonarithmetic hyperbolic $3$-manifold that we knew to have both of these properties, although work of Agol \cite{Agol} shows that some nonarithmetic right--angled reflection groups, for instance in the L\"obell polyhedron $L(7)$, have them both as well.

%%%%%%%%%%%%%%%%%%%%%%%%%%%%%%%%%%%%%%%%%%%%%

\section*{Acknowledgments}

Thanks to Steve Boyer for explaining to me the results of \cite{ABZ}, and to Cameron Gordon and Ian Agol for helpful conversations.  Special thanks to Alan Reid for pointing me toward this problem and his continued help and guidance.

%%%%%%%%%%%%%%%%%%%%%%%%%%%%%%%%%%%%%%%%%%%%%

\section{Virtually fibering branched covers} \label{sec:Thurston}

We will be concerned in this paper with branched coverings.  The \textit{standard} $k$-fold cyclic branched covering of the disk $D^2$ to itself is the quotient map which identifies each point $z \in D^2 \subset \mathbb{C}$ with points of the form $ze^{2\pi i \frac{j}{k}}$, $0 \leq j < k$.  For a $3$-manifold $M$, an $n$-fold branched covering $q \co M_n \to M$, branched over a link $L \subset M$, is characterized by the property that $L$ has a closed regular neighborhood $\mathcal{N}(L)$, with \textit{exterior} $\mathcal{E}(L) = \overline{M - \mathcal{N}(L)}$, such that \begin{enumerate}
  \item  On $\mathcal{E}_n \doteq q^{-1}(\mathcal{E}(L))$, $q$ restricts to a genuine $n$-fold covering map.
  \item  Each component $V$ of $q^{-1}(\mathcal{N}(L))$ has a homeomorphism to $D^2 \times S^1$ so that $q|_V$ is the product of the standard $k$-fold branched cover with the identity map $S^1 \to S^1$, for some $k > 1$ dividing $n$.
\end{enumerate}

\begin{remark}  It might be more accurate to allow the map in condition 2 to be the product of the $k$-fold branched covering of $D^2$ with a nontrivial covering of $S^1$ to itself.  Since we will not encounter examples with this property here, we restrict our attention to the simpler setting.  \end{remark}

Now suppose $p' \co M' \rightarrow M$ is a genuine $g$-fold covering space, and let $\mathcal{E}' = (p')^{-1}(\mathcal{E}(L)) \subset M'$ be the associated cover of $\mathcal{E}(L)$.  The group $\pi(L) \doteq \pi_1(\mathcal{E}(L))$ has subgroups $\Gamma_n$, and $\Gamma'$, corresponding to the covers of $\mathcal{E}(L)$ by $\mathcal{E}_n$ and $\mathcal{E}'$, respectively.  Below we record an elementary observation about $\Gamma'$.

\begin{fact} Let $V \cong D^2 \times S^1$ be a component of $\mathcal{N}(L)$ and take $\mu = \partial D^2 \times \{y\}\subset \mathcal{E}(L)$ for some $y \in S^1$.  If $[\mu]$ represents the homotopy class of $\mu$ in $\pi(L)$, then $[\mu] \in \Gamma'$.  
\end{fact}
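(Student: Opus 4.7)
The plan is to chase the meridian $\mu$ through two different covering maps and match the lifts. Observe first that $\mu = \partial D^2 \times \{y\}$ bounds the meridional disk $D^2 \times \{y\}$ inside $V \subset M$, so $\mu$ is null-homotopic in $M$. Since $p' \co M' \to M$ is a \emph{genuine} covering (not branched), any path lift of $\mu$ to $M'$ is a closed loop in $M'$.

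Next, I would note that the restriction $p'|_{\mathcal{E}'} \co \mathcal{E}' \to \mathcal{E}(L)$ is itself an (honest) covering map, since $p'$ is a covering and $\mathcal{E}' = (p')^{-1}(\mathcal{E}(L))$. Fix a basepoint $x_0 \in \mathcal{E}(L)$ and a preimage $\widetilde x_0 \in \mathcal{E}'$ so that $\Gamma'$ is identified with $(p'|_{\mathcal{E}'})_* \pi_1(\mathcal{E}',\widetilde x_0)$. Because $\mu$ lies entirely in $\mathcal{E}(L)$, path-lifting with respect to $p'|_{\mathcal{E}'}$ yields a lift $\widetilde \mu$ of $\mu$ starting at $\widetilde x_0$ and contained in $\mathcal{E}'$.

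Now both $\widetilde\mu$ and the global lift produced in the first paragraph are lifts of the same path $\mu$ in $M'$ starting at the same point $\widetilde x_0$, so by uniqueness of path lifts they coincide. But the global lift is a loop, so $\widetilde\mu$ is a loop in $\mathcal{E}'$, giving $[\mu] \in \Gamma'$.

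This is short enough that I do not expect a genuine obstacle; the only thing to be careful about is not confusing the two covering maps. In particular, one must resist the temptation to conclude $[\mu]\in\Gamma'$ directly from the fact that $\mu$ is null-homotopic in $M$, since $\pi_1(M)$ is the wrong target — the whole content is that the nullhomotopy in $M$ forces the lift to close up, and that lift happens to remain in the exterior because $\mu$ did.
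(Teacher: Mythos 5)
Your argument is correct and is essentially the paper's proof: both rest on the observation that $\mu$ bounds a meridional disk in $M$, hence is null-homotopic there, so because $p'$ is a genuine (unbranched) cover its lifts close up, and those lifts stay inside $\mathcal{E}'$ because $\mu$ does. The paper phrases this by lifting the disk $D^2\times\{y\}$ directly, while you lift $\mu$ and invoke uniqueness of path lifts — the same idea written out slightly more explicitly.
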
 

%Recall that by hypothesis, $\Gamma_n = \ker \phi_n$, where $\phi_n$ is a homomorphism $\pi(L) \onto \mathbb{Z}_n$, and $\Gamma'$ has index $g$.  

This holds because $p'|_{\mathcal{E}'}$ extends to a covering map on $M'$; hence each component of the preimage of $\mu$ bounds a lift to $M'$ of the inclusion map $D^2 \times \{y\} \hookrightarrow M$.  We will call \textit{meridians} curves of the form $\partial D^2 \times \{y\} \subset \partial \mathcal{E}(L)$.  Take $\widetilde{\Gamma} = \Gamma_n \cap \Gamma'$, and let $\tilde{p} \co \widetilde{\mathcal{E}} \to \mathcal{E}$ be the associated covering space, factoring through the restriction of $q$ to $\mathcal{E}_n$ and of $p'$ to $\mathcal{E}'$.  We will say that $\widetilde{\mathcal{E}}$ \textit{completes the diamond}.  

\begin{lemma} \label{here's yer cover}  Let $p' \co M' \to M$ be a $g$-fold cover and $q_n \co M_n \rightarrow M$ an $n$-fold branched cover, branched over a link $L \subset M$.  Let $\mathcal{E}(L)$ be the exterior of $L$ and $\mathcal{E}'$, $\mathcal{E}_n$ its covers associated to $M'$ and $M_n$, respectively.  If $\tilde{p} \co \widetilde{\mathcal{E}} \to \mathcal{E}(L)$ completes the diamond, then $\tilde{p}$ extends to a map $\tilde{p} \co \widetilde{M} \rightarrow M$ such that $\tilde{p} = p' \circ q = q_n \circ p$ for a branched cover $q \co \widetilde{M} \rightarrow M'$ and a cover $p \co \widetilde{M} \to M_n$.
\end{lemma}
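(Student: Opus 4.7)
The plan is to construct $\widetilde{M}$ from $\widetilde{\mathcal{E}}$ by Dehn filling each boundary torus along an appropriate meridional slope, chosen so that the covering maps $\widetilde{\mathcal{E}} \to \mathcal{E}_n$ and $\widetilde{\mathcal{E}} \to \mathcal{E}'$ both extend across the attached solid tori.

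To set things up, fix a boundary component $T_0$ of $\mathcal{E}(L)$, the frontier of a component $V_0 \cong D^2 \times S^1$ of $\mathcal{N}(L)$ with meridian $\mu_0$, and pick a component $\widetilde{T}$ of $\tilde{p}^{-1}(T_0)$. Since $\widetilde{\mathcal{E}}$ factors through both $\mathcal{E}_n$ and $\mathcal{E}'$, the torus $\widetilde{T}$ covers some component $T_n = \partial V_n$ of $q_n^{-1}(T_0)$, where $q_n|_{V_n}$ is the product of the standard $k$-fold cyclic branched cover with $\mathrm{id}_{S^1}$ for some $k \mid n$, and some component $T' = \partial V_j'$ of $(p')^{-1}(T_0)$, where $p'|_{V_j'}$ is an unbranched cover of solid tori.

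The crux is to show that the meridian $\mu_{V_n}$ of $V_n$ lifts to a simple closed curve on $\widetilde{T}$. Choosing a basepoint path to $\widetilde{T}$ identifies $\pi_1(\widetilde{T})$ with $\widetilde{\Gamma} \cap g\pi_1(T_0)g^{-1}$ for some $g \in \pi(L)$, and identifies $\mu_{V_n}$ with $g\mu_0^k g^{-1}$. Now $g\mu_0 g^{-1}$ is itself a meridian (based at a translate of the basepoint), so the Fact gives $g\mu_0 g^{-1} \in \Gamma'$, hence $g\mu_0^k g^{-1} \in \Gamma'$. The same element lies in $\Gamma_n$, because it represents the loop $\mu_{V_n} \subset T_n \subset \mathcal{E}_n$. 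Therefore $g\mu_0^k g^{-1} \in \widetilde{\Gamma}$. Since $\mu_{V_n}$ is primitive in $\pi_1(T_n)$ and $\pi_1(\widetilde{T}) \le \pi_1(T_n)$, it remains primitive in $\pi_1(\widetilde{T})$, and so is represented by a simple closed curve $\tilde{\mu} \subset \widetilde{T}$.

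Dehn filling each $\widetilde{T}$ along $\tilde{\mu}$ produces $\widetilde{M}$, with both extensions forced. On the attached solid torus $\widetilde{V}$, the meridian maps homeomorphically onto the meridian of $V_n$, so the unbranched cover $\widetilde{\mathcal{E}} \to \mathcal{E}_n$ extends as an unbranched cover of solid tori, giving $p \co \widetilde{M} \to M_n$. Meanwhile, since the meridian of $V_j'$ maps homeomorphically to $\mu_0$, the meridian of $\widetilde{V}$ maps to the $k$-th power of the meridian of $V_j'$; thus $q$ extends as the product of a standard $k$-fold branched cover of disks with a cover of circles, giving the branched cover $q \co \widetilde{M} \to M'$. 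The identity $\tilde{p} = q_n \circ p = p' \circ q$ then holds on $\widetilde{\mathcal{E}}$ by hypothesis and on each $\widetilde{V}$ by construction. The main obstacle is the third paragraph: locating the slope $\tilde{\mu}$ requires the combined force of the Fact (to put $g\mu_0 g^{-1}$ in $\Gamma'$) and of the branched covering structure (to identify the correct power of $\mu_0$ that enters $\Gamma_n$).
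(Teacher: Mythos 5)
Your proposal is correct and follows essentially the same route as the paper: use the Fact to show that the appropriate power of the meridian lies in $\widetilde{\Gamma}$, Dehn fill $\widetilde{\mathcal{E}}$ along the resulting slope, and check that the filling forces $p$ to extend as an honest cover and $q$ as a branched cover. The paper's version additionally tracks lifts of the longitude $\lambda$ to pin down the product structure on the attached solid tori (an argument your proposal leaves implicit), but the key step — that $[\mu]\in\Gamma'$ together with $[\mu]^k\in\Gamma_n$ forces $[\mu]^k\in\widetilde{\Gamma}$ — is the same in both.
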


\begin{proof}  Since $\widetilde{\mathcal{E}}$ corresponds to the subgroup $\widetilde{\Gamma} = \Gamma_n \cap \Gamma' < \pi(L)$, there are coverings $p \co \widetilde{\mathcal{E}} \to \mathcal{E}_n$ and $q \co \widetilde{\mathcal{E}} \to \mathcal{E}'$ such that $\tilde{p} = q_n \circ p = p' \circ q$.

Let $V \cong D^2 \times S^1$ be a component of $\mathcal{N}(L)$, let $\lambda = \{x\}\times S^1$ for some $x \in D^2$, and let $\mu =\partial D^2 \times \{y\} \in \mathcal{E}(L)$ for some $y \in S^1$.  Taking $[\mu] \in \pi(L)$ to represent the homotopy class of $\mu$, we have $[\mu] \in \Gamma'$, so $[\mu]^k \in \widetilde{\Gamma}$ if and only if $[\mu]^k \in \Gamma_n$.  On the other hand, condition 2 in the characterization of branched coverings implies that each component of the preimage of $\lambda$ in $\mathcal{E}_n$ maps homeomorphically to $\lambda$ under $q_n$.  Thus $[\lambda] \in \Gamma_n$, so $[\lambda]^j \in \widetilde{\Gamma}$ if and only if $[\lambda]^j \in \Gamma'$.

Let $T$ be a component of $\partial \widetilde{\mathcal{E}}$ which maps under $\tilde{p}$ to $\partial V$, and let $\tilde{\mu}$ and $\tilde{\lambda}$ be components on $T$ of the preimages of $\mu$ and $\lambda$, respectively.  Then by the above, the map $p$ restricts to a homeomorphism of $\tilde{\mu}$ onto its image, and $\mathrm{deg}\, p|_{\tilde{\lambda}} = \mathrm{deg}\, p|_T$. On the other hand, $q$  has the same degree on $T$ as it does on $\tilde{\mu}$, and it restricts on $\lambda$ to a homeomorphism.

Let $\widetilde{\mathcal{E}}(\tilde\mu)$ be the manifold obtained from $\widetilde{\mathcal{E}}$ by Dehn filling along $\tilde\mu$.  That is $\widetilde{\mathcal{E}}(\tilde\mu)$ is the quotient of $\widetilde{\mathcal{E}} \sqcup \widetilde{V}$, where $\widetilde{V} \cong D^2 \times S^1$, by a homeomorphism $\partial D^2 \times S^1 \to T$ taking $\partial D^2 \times \{y\}$ to $\tilde{\mu}$ for some $y \in S^1$.  We may assume without loss of generality that this homeomorphism takes some $\{x\} \times S^1$ to $\tilde{\lambda}$.  Then $p$ extends to a covering map $\widetilde{\mathcal{E}}(\tilde{\mu}) \to \mathcal{E}_n(p(\tilde\mu))$, sending $\widetilde{V}$ to its image by the product of the identity map with the $(\mathrm{deg}\, p|_T)$-fold cover of $S^1$ to itself.   Likewise, $q$ extends to a branched cover $\widetilde{\mathcal{E}}(\tilde{\mu}) \to \mathcal{E}'(q(\tilde\mu))$.

We now take $\widetilde{M}$ to be the manifold obtained from $\widetilde{\mathcal{E}}$ by Dehn filling each boundary component along the preimage of the corresponding meridian.  It follows from the paragraph above that $p$ extends to the manifold obtained from $\mathcal{E}_n$ by filling boundary components along preimages of meridians.  By definition, this is $M_n$.  Similarly, $q$ extends to a branched covering from $\widetilde{M}$ to $M'$, establishing the lemma.  \end{proof}

Thurston used a version of Proposition \ref{technical_branched} below to show the reflection orbifold in a right--angled dodecahedron is virtually fibered (cf. \cite{Sul}); this fact is also used by Boyer-Zhang \cite[Cor 1.4]{ABZ}.  Our version explicitly describes a fibered cover.

\begin{prop} \label{technical_branched} Suppose $p' \co M' \to M$ is a $g$-fold cover and $q_n \co M_n \to M$ an $n$-fold branched cover, branched over a link $L \subset M$.  If $M'$ fibers over $S^1$ with fibers transverse to $(p')^{-1}(L)$, then the manifold $\widetilde{M}$ supplied by Lemma \ref{here's yer cover} fibers over $S^1$ in such a way that $q \co \widetilde{M} \to M'$ is fiber-preserving.  \end{prop}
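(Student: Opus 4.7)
The plan is to pull back the given fibration $f \co M' \to S^1$ by $q$: namely, set $\widetilde{f} = f \circ q \co \widetilde{M} \to S^1$ and verify that $\widetilde{f}$ is a submersion (after smoothing near the branch locus). Since $\widetilde{M}$ is compact, Ehresmann's theorem will then give that $\widetilde{f}$ is a locally trivial fibration, and fiber-preservation of $q$ is automatic from the definition of $\widetilde{f}$.

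On $\widetilde{M} \setminus q^{-1}\bigl((p')^{-1}(L)\bigr)$ the map $q$ restricts, by Lemma~\ref{here's yer cover}, to a genuine covering onto $M' \setminus (p')^{-1}(L)$, hence is a local homeomorphism. Because $f$ is a submersion, so is $\widetilde{f}$ on this open set. The only real content is verifying submersivity at points of the branch locus.

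For this, let $V \cong D^2 \times S^1$ be a component of a regular neighborhood of $(p')^{-1}(L)$ in $M'$, with the relevant component of $(p')^{-1}(L)$ being $\{0\} \times S^1 \subset V$. By property~(2) of a branched covering applied to $q$, each component $\widetilde{V}$ of $q^{-1}(V)$ is homeomorphic to $D^2 \times S^1$ in such a way that $q|_{\widetilde V}$ takes the form $(z, s) \mapsto (z^k, s)$ for some $k \ge 1$. The hypothesis that fibers of $f$ are transverse to $(p')^{-1}(L)$ says exactly that at every point of $\{0\} \times S^1 \subset V$, the differential $df$ is nonzero on the $S^1$-tangent direction. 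Since $q|_{\widetilde V}$ restricts to the identity on $\{0\} \times S^1 \to \{0\} \times S^1$, the analogous statement for $d\widetilde f$ at points of $\{0\} \times S^1 \subset \widetilde V$ follows by the chain rule, completing the check.

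I do not expect a serious obstacle: the argument is a local computation, and the transversality hypothesis is precisely what is needed at the branch locus. The one mild bookkeeping issue is that branched covers here are defined topologically, so to speak of $\widetilde{f}$ as a submersion one should either smooth the structures in neighborhoods of the branch locus or, equivalently, observe directly from the explicit product form of $q|_{\widetilde V}$ that $\widetilde f|_{\widetilde V}$ is locally modeled on a projection (after post-composing with a local chart of $S^1$), so that $\widetilde f$ is locally a fiber bundle projection throughout $\widetilde{M}$.
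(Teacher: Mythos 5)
Your argument is correct, and it reaches the same conclusion by a differently flavored route. The paper's proof first normalizes: it isotopes the given fibering of $M'$ so that fibers meet each component $\widetilde{V}$ of the preimage of $\mathcal{N}(L)$ in meridian disks $D^2\times\{y\}$, restricts to a fibering of $\mathcal{E}'$ whose fiber-boundary consists of meridians, pulls that back along the genuine covering $q\co\widetilde{\mathcal{E}}\to\mathcal{E}'$, and then observes that the boundary curves of the pulled-back fiber are exactly the curves along which $\widetilde{M}$ is Dehn-filled, so the fibering extends by capping off with meridian disks of the filling solid tori. You instead skip the isotopy entirely: you define $\widetilde{f}=f\circ q$ on all of $\widetilde{M}$ at once, check that it is a submersion (away from the branch locus because $q$ is a local homeomorphism, and along the branch locus via the chain rule, the local model $(z,s)\mapsto(z^k,s)$, and the transversality hypothesis which says $df$ is nonvanishing on the $S^1$-direction), and invoke Ehresmann. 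Unwinding both, the resulting fibration is the same composition; the paper's isotopy is just a device to make the extension across the filling tori transparent combinatorially, whereas your infinitesimal check makes it transparent analytically. The trade-off is that your version imports smooth structures and Ehresmann's theorem into what is otherwise a topological/PL argument — the issue you correctly flag at the end — while the paper's version stays entirely in that setting. One small bookkeeping point worth noting: fiber-preservation in your formulation is indeed immediate from $\widetilde{f}=f\circ q$, which is a slight gain in clarity over the paper's phrase ``which $q$ maps to that of $M'$ by construction.''
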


\begin{proof}  Since $p^{-1}(L)$ is transverse to the fibering of $M$, for each component $V$ of $\mathcal{N}(L)$, a homeomorphism to $D^2 \times S^1$ may be chosen so that after an ambient isotopy of the fibering, for any component $\widetilde{V}$ of $(p')^{-1}(V)$, each fiber intersects $\widetilde{V}$ in a collection of disjoint disks of the form $D^2 \times \{y\}$ for $y \in S^1$.  Then $\mathcal{E}'$ inherits a fibering from $M'$ with the property that each fiber intersects the boundary in a collection of meridians.

By definition, each curve on $\partial \widetilde{\mathcal{E}}$ which bounds a disk in $\widetilde{M}$ is a component of the preimage of a meridian of $\mathcal{E}$.  Hence the fibering which $\widetilde{\mathcal{E}}$ inherits from $\mathcal{E}'$ by pulling back using $q$ extends to a fibering of $\widetilde{M}$, which $q$ maps to that of $M'$ by construction.
\end{proof}

We will encounter the following situation: $M'$ is the trivial $F$-bundle over $S^1$ for some closed  surface $F$, homeomorphic to $F \times I/((x,1) \sim (x,0))$, and $(p')^{-1}(L)$ consists of simple closed curves in disjoint copies of $F$.  Here $I = [0,1]$.  Let $\pi \co M' \rightarrow F$ be projection to the first factor.  The second main result of this section describes a property of the collection $\pi((p')^{-1}(L))$ which allows a fibering of $M'$ to be found satisfying the hypotheses of Proposition \ref{technical_branched}.

\begin{prop}  Let $M = F \times I/(x,0) \sim (x,1)$, and suppose $L = \{\lambda_1, \hdots \lambda_m\}$ is a link in $M$ such that for each $j$ there exists $t_j \in I$ with $\lambda_j \subset F \times \{t_j\}$, and $t_j \ne t_{j'}$ for $j \ne j'$.  Suppose there is a collection of disjoint simple closed curves $\{\gamma_1,\hdots,\gamma_n\}$ on $F$, each transverse to $\pi(\lambda_j)$ for all $j$, with the following properties.  For each $j$, there is an $i$ such that $\pi(\lambda_j)$ intersects $\gamma_i$, and a choice of orientation of the $\gamma_i$ and all curves of $L$ may be fixed so that for any $i$ and $j$, $\gamma_i$ and $\pi(\lambda_j)$ have equal algebraic and geometric intersection numbers.  Then $M$ has a fibering transverse to $L$.  \label{transverse} \end{prop}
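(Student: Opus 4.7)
The plan is to produce a fibering $\phi \co M \to S^1$ transverse to $L$ by setting $\phi(p,t) = t + g(p)$ for a suitable smooth map $g \co F \to S^1$. Any such $\phi$ satisfies $\partial\phi/\partial t = 1 \ne 0$, so $\phi$ is a submersion and hence a locally trivial fibration over $S^1$. Transversality of $\phi$ to $\lambda_j$ at a point $x \in \lambda_j \subset F \times \{t_j\}$ reduces to nonvanishing of $dg$ on the tangent line to $\pi(\lambda_j)$ at $\pi(x)$, since the tangent to $\lambda_j$ lies in $TF \subset TM$. Thus the proposition reduces to constructing $g \co F \to S^1$ so that $g|_{\pi(\lambda_j)}$ is a submersion for each $j$.

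I will produce $g$ as the primitive of a closed integral $1$-form $\beta$ on $F$ in the cohomology class $R \cdot \mathrm{PD}\!\left(\sum_i [\gamma_i]\right) \in H^1(F;\mathbb{Z})$ for some positive integer $R$, whose restriction to each $\pi(\lambda_j)$ evaluates positively on the tangent field. As a first approximation, fix disjoint annular neighborhoods $N_i = \gamma_i \times (-\epsilon,\epsilon)$ of the $\gamma_i$, with $\epsilon$ small enough that---by continuity, using the sign-consistency guaranteed by the algebraic-equals-geometric hypothesis---every arc of $\pi(\lambda_j) \cap N_i$ has strictly positive $ds/dt$ throughout. Setting $\beta_0 = \sum_i R\rho(s)\,ds$ on $\bigcup N_i$ (with $\rho \ge 0$ a bump of integral $1$) and $\beta_0 = 0$ elsewhere gives a closed $1$-form in the desired cohomology class that evaluates positively on the tangent to $\pi(\lambda_j)$ at every point of $\pi(\lambda_j) \cap \bigcup N_i$, but vanishes on the complementary arcs.

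To achieve positivity everywhere along $\bigcup_j\pi(\lambda_j)$, I modify $\beta_0$ by adding an exact form $df$, with $f \co F \to \mathbb{R}$ constructed first on the 1-complex $\Pi := \bigcup_j\pi(\lambda_j)$. The requirement on $f$ along each $\pi(\lambda_j)$ is the pointwise inequality $(f \circ \pi(\lambda_j))'(t) > -\beta_0(\dot{\pi(\lambda_j)})(t)$; integration around the loop demands only $0 > -Rk_j$, which holds since $k_j = \sum_i (\gamma_i \cdot \pi(\lambda_j)) > 0$ by hypothesis. At transverse crossings of two components of $\Pi$ the two differential conditions determine $df_p$ uniquely, and integration around arbitrary loops of $\Pi$ inside $F$ is consistent because $\beta := \beta_0 + df$ and $\beta_0$ are cohomologous by construction. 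Extending $f$ to a smooth function on $F$ by a partition of unity on a regular neighborhood of $\Pi$ and integrating $\beta = \beta_0 + df$ produces a smooth map $g \co F \to S^1$ with $g|_{\pi(\lambda_j)}$ a covering of $S^1$ of degree $Rk_j$; then $\phi(p,t) = t + g(p)$ is the desired fibering transverse to $L$. The main obstacle is the pointwise control of $f$ simultaneously along all (possibly mutually intersecting) components of $\Pi$: the algebraic-equals-geometric hypothesis supplies positivity of each $k_j$, and the freedom to enlarge $R$ furnishes the necessary budget for the pointwise inequality.
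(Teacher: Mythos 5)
Your plan replaces the paper's argument with a search for a map $g\colon F\to S^1$ such that the fibers of $\phi(p,t)=t+g(p)$ are transverse to $L$. Note that these fibers are \emph{sections} of the projection $\pi\colon M\to F$, which is a strictly stronger requirement than the paper imposes. (Your $\beta_0$-fibration is exactly the paper's spun fibration $F_{A_1,\dots,A_n}(t)$, but the paper then isotopes the curves $\lambda_j$ rather than modifying $g$.) This restriction is where the argument breaks.

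Concretely: the condition $dg(\dot{\pi(\lambda_j)})>0$ pointwise forces $\int_c dg>0$ for \emph{every} directed cycle $c$ in the $1$-complex $\Pi=\bigcup_j\pi(\lambda_j)$, not merely for the cycles $\pi(\lambda_j)$ themselves. Since $\int_c dg$ depends only on $[c]\in H_1(F)$ and equals $R\bigl(c\cdot\sum_i\gamma_i\bigr)$, this requires every directed cycle made from arcs of several $\pi(\lambda_j)$'s, traversed in their chosen orientations, to have positive algebraic intersection with $\sum\gamma_i$. The hypotheses of Proposition \ref{transverse} control only $\gamma_i\cdot\pi(\lambda_j)$; they place no constraint on how the $\pi(\lambda_j)$'s meet one another. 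In particular two of the $\pi(\lambda_j)$'s may form a directed bigon lying entirely in $F\setminus\bigcup\gamma_i$, giving a directed cycle $c$ with $\int_c\beta_0=0$; then no $f$ can satisfy the pointwise inequality, no matter how large $R$ is. Your remark that ``integration around arbitrary loops of $\Pi$ \ldots is consistent because $\beta$ and $\beta_0$ are cohomologous'' conflates the cohomological constraint (integrals agree) with the positivity constraint (integrals over directed cycles must be strictly positive); the latter is not a consequence of the former. Also, at a transverse crossing two inequalities on a $2$-dimensional covector space cut out an open cone, not a point, so they do not ``determine $df_p$ uniquely.''

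The paper's proof does not run into this because it never requires the new fibration to be a graph over $F$: each $\lambda_j$ is moved by an ambient isotopy supported near $F\times\{t_j\}$, and the distinctness of the $t_j$ makes these supports disjoint. Thus the pushdown of $\lambda_j$ near a crossing point $p\in\pi(\lambda_j)\cap\pi(\lambda_{j'})$ is entirely independent of the pushdown of $\lambda_{j'}$ near $p$; your approach forces them to agree (both are read off from $dg_p$). If you want to salvage your argument you would need to add the hypothesis that $\Pi$ has no directed cycle homologically trivial against $\sum\gamma_i$ (equivalently, that a closed $1$-form positive on all of $\Pi$ exists in the class $R\,\mathrm{PD}(\sum[\gamma_i])$), but that is strictly stronger than what Proposition \ref{transverse} assumes.
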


The other fiberings needed to prove this theorem may be found by \textit{spinning} annular neighborhoods of the $\gamma_i$ in the fiber direction.  We first saw this technique in \cite{Jaco}.

\begin{dfn}  Let $M = F \times I/(x,0) \sim (x,1)$ be the trivial bundle, and let $\gamma$ be a simple closed curve in $F$.  Let $A$ be a small annular neighborhood of $\gamma$, and fix a marking homeomorphism $\phi \co S^1 \times I \rightarrow A$.  We define the fibration obtained by spinning $A$ in the fiber direction to be
\[ F_A(t) = \left( (F-A) \times \{t\} \right)\ \bigcup\ \Phi_t(S^1 \times I), \]
where $\Phi_t(x,s) = (\phi(x,s),\rho(s)+t)$, for $t$ between $0$ and $1$.  Here we take $\rho \co I \rightarrow I$  to be a smooth, nondecreasing function taking $0$ to $0$ and $1$ to $1$, which is constant on small neighborhoods of $0$ and $1$ and has derivative at least $1$ on $[1/4,3/4]$.  

Given a collection of disjoint simple closed curves $\gamma_1,\hdots,\gamma_n$, one analogously produces a new fibration $F_{A_1,\hdots,A_n}(t)$ by spinning an annular neighborhood of each in the fiber direction.
\end{dfn}

Suppose $\lambda$ is a simple closed curve in $F$ which has identical geometric and algebraic intersection numbers with the core of each annulus $A_i$ in such a collection; that is, an orientation of $\lambda$ is chosen so that each oriented intersection with the core of each $A_i$ has positive sign.

\begin{lemma}  Let $\lambda$ be such a curve, embedded in $M$ by its inclusion into $F \times \{t_0\}$, $t_0 \in (0,1)$.  There is an ambient isotopy which moves $\lambda$ to be transverse to the fibration $F_{A_1,\hdots,A_n}(t)$, and which may be taken to be supported in an arbitrarily small neighborhood of $F \times \{t_0\}$.  \end{lemma}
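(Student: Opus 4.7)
The plan is to lift $\lambda$ off the slice $F \times \{t_0\}$ by a small perturbation in the $t$-direction, chosen so that its tangent never lies in a fiber of $F_{A_1, \hdots, A_n}(t)$ while the curve itself remains close to $F \times \{t_0\}$.

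First, using the hypothesis that $\lambda$ has equal algebraic and geometric intersection with each $\gamma_i$, one isotopes $\lambda$ within $F \times \{t_0\}$ so that $\lambda \cap A_i$ consists of finitely many arcs on which the $I$-coordinate $s$ from the marking $\phi$ is strictly increasing from $0$ to $1$. Any arcs of $\lambda \cap A_i$ that miss $\gamma_i$ may be pushed out of $A_i$, and the remaining essential arcs, being disjoint and parallel in the annulus, can be simultaneously straightened.

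Next one computes the tangent plane of $F_{A_1, \hdots, A_n}(t)$ at a point in $A_i$: at $q = \phi(x,s)$ it is spanned by $(\partial_x \phi, 0)$ and $(\partial_s \phi, \rho'(s))$. Hence a lifted curve $\tilde{\lambda}(r) = (\lambda(r), t(r))$ is transverse to the fibration exactly when $t'(r) \neq 0$ wherever $\lambda(r) \in F - A$ and $t'(r) \neq s'(r) \rho'(s(r))$ wherever $\lambda(r) \in A$. After the straightening, $s'(r) > 0$ inside each $A_i$, and $\rho'(s) \geq 0$ everywhere by definition, so it is enough to arrange $t'(r) > 0$ on the $F - A$ portion of $\lambda$ and $t'(r) < 0$ on the $A$ portion.

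For any prescribed $\eta > 0$, such a $t$ can be chosen smooth, periodic, and $\eta$-close to $t_0$: one balances the total positive contribution from the $F - A$ arcs against the total negative contribution from the $A$ crossings so that $\int_0^1 t'(r)\,dr = 0$, then scales the construction to control $|t(r) - t_0|$. The straight-line homotopy $\lambda_\tau(r) = (\lambda(r), (1-\tau)t_0 + \tau t(r))$ is an isotopy through embeddings inside $F \times (t_0 - \eta, t_0 + \eta)$, and extends by isotopy extension to an ambient isotopy of $M$ supported in an arbitrarily small neighborhood of $F \times \{t_0\}$. The main obstacle is the straightening step: without the algebraic-equals-geometric hypothesis, some crossings would have $s'(r) < 0$, forcing $s'(r) \rho'(s(r)) \leq 0$ there and ruling out a uniform sign for $t'$ on $A$ compatible with the periodicity of $t$.
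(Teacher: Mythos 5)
Your strategy --- first use the equal-algebraic-and-geometric hypothesis to isotope $\lambda$ within $F$ so that its intersection with each $A_i$ consists of arcs on which the $I$-coordinate $s$ is strictly monotone, then lift $\lambda$ off $F\times\{t_0\}$ by a small smooth function $t(r)$ and check slopes --- is the same as the paper's, and your stated transversality criterion ($t'(r)\neq 0$ off $A$, $t'(r)\neq s'(r)\rho'(s(r))$ in $A$) is correct.  However, the specific sign pattern you prescribe for $t'$ does not give transversality.  You require $t'>0$ wherever $\lambda(r)\in F-A$ and $t'<0$ wherever $\lambda(r)\in A$.  No smooth $t$ satisfies both strict inequalities: $t'$ must change sign, and it can only vanish on $\lambda^{-1}(\partial A)$, so the inequalities cannot both be strict there.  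Even if you relax to $t'=0$ at $\lambda^{-1}(\partial A)$, the transversality criterion fails precisely there: $\rho$ is required to be constant near $s=0$ and $s=1$, so $\rho'(s)=0$ at $\partial A$; the fiber tangent plane is horizontal, and a vanishing $t'$ means the curve's tangent is also horizontal, hence contained in it.  This is a genuine tangency, not a transverse intersection.

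The two sign conventions are not interchangeable.  With yours, every annulus crossing forces a sign change of $t'$ at each of its endpoints, and every such endpoint lies in the flat zone where $\rho'=0$.  The paper instead takes $t'=-\epsilon<0$ uniformly outside a single small parameter window $[1/4,3/4]$, arranged to coincide with the arc $\{x_1\}\times[1/4,3/4]$ in the interior of one annulus, where $\rho'\geq 1$.  On that window $t'$ rises only as high as $2\epsilon<1$, so the inevitable zero of $t'$ occurs where the fiber slope is bounded below by $1$, keeping $t'(r)\neq s'(r)\rho'(s(r))$ throughout.  Away from the window $t'=-\epsilon$ is strictly negative, which differs from the nonnegative fiber slope everywhere else.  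So the correct fix is to make $t'$ negative across $F-A$ and across all of $A$ near $\partial A$, allowing it to become positive only deep inside one (or more) annulus crossings where $\rho'$ is large, rather than trying to hold $t'<0$ throughout $A$.
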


\begin{proof} $\lambda$ may be isotoped in $F$ so that its intersection with the $A_i$ is of the form $(\{x_1\}\times I) \sqcup \hdots \sqcup (\{x_k\} \times I)$ for some collection $\{ x_1,\hdots,x_k\}$ of points in their cores.  For reference fix a Riemannian metric on $F$ in which the $A_i$ are isometrically embedded with their natural product metric, and choose a smooth unit--speed parametrization $\lambda(t)$ ($t \in I$) so that $\lambda([1/4,3/4]) = \{x_1\} \times [1/4,3/4]$.  For fixed small $\epsilon > 0$, we embed $\lambda$ in $M$ with the aid of a map $h_{\epsilon} \co I \rightarrow I$, defined as follows.  Let $h'_{\epsilon}$ be a smooth bump function which takes the value $-\epsilon$ on $[0,1/4]$ and $[3/4,1]$, is increasing on $[1/4,3/8]$ and decreasing on $[5/8,3/4]$, takes the value $2\epsilon$ on $[3/8,5/8]$, and has integral equal to $0$.  Then define $h_{\epsilon}$ by \[ h_{\epsilon}(s) = t_0 + \int_{0}^s h'_{\epsilon}, \]
and let $\lambda_{\epsilon}(s) = (\lambda(s),h_{\epsilon}(s))$.

At any point of $M$, the parametrization of $M$ as $F \times I/(x,1)\sim (x,0)$, gives a natural decomposition of the tangent space.  We call \textit{horizontal} the tangent planes to $F$, and let $\mathbf{t}$ denote the vertical vector pointing upward.  In the complement of the vertical tori determined by the $A_i \times I$, the new fiber surface $F_{A_1,\hdots,A_n}(t)$ has horizontal tangent planes, for each $t$.  Since the intersection of $\lambda_{\epsilon}$ with this region is contained in $\lambda_{\epsilon}([0,1/4] \cup [3/4,1])$, its tangent vector in this region has $\mathbf{t}$--component $-\epsilon$.  Hence intersections in this region between $\lambda_{\epsilon}$ and copies of the fiber surface $F_{A_1,\hdots,A_n}$ are transverse.

For points in $A_i$, consider the vertical plane spanned by $\mathbf{t}$ and the tangent vector to the $I$--factor of $A_i$.  Tangent vectors to $\lambda_{\epsilon}$ at points which lie in $A_i \times I$ lie in this plane with slope between $-\epsilon$ and $2\epsilon$, possibly greater than $-\epsilon$ only between $1/4$ and $3/4$.  On the other hand, the intersection of the tangent plane to $F_{A_1,\hdots,A_n}$ intersects the vertical plane in a line with slope greater than or equal to $0$, and greater than or equal to $1$ on $[1/4,3/4]$.  Thus as long as $2\epsilon < 1$, any intersection of $\lambda_{\epsilon}$ with a copy of $F_{A_1,\hdots,A_n}$ in these regions is transverse as well.  The original embedding of $\lambda$ may clearly be moved to $\lambda_{\epsilon}$ by a small ambient isotopy, and since $\lambda_{\epsilon}$ is transverse to each $F_{A_1,\hdots,A_n}(t)$, this proves the lemma.
\end{proof}

\begin{proof}[Proof of Proposition \ref{transverse}]  Let $\{\gamma_1,\hdots,\gamma_n\}$ be a collection satisfying the hypotheses, and let $\{A_i\}$ be a collection of disjoint annular regular neighborhoods of the $\gamma_i$ in $F$.  By the lemma above, each member $\lambda_j$ of $L$ may be moved by an ambient isotopy to be transverse to the fibration obtained by spinning each $A_i$ in the fiber direction.  Since the components of $L$ lie in disjoint fibers of the original fibration, these isotopies may be taken to have disjoint supports.  Then the inverse of their composition, applied to the fibration obtained by spinning $A$ in the fiber direction, produces a new fibration which is transverse to $L$.  \end{proof}

%%%%%%%%%%%%%%%%%%%%%%%%%%%%

\section{Introducing the tetrus} \label{sec:tetrus}

In this section and the next, we will frequently encounter branched coverings of the form $q \co M_n \to M$, where $M_n$ and $M$ are manifolds with nonempty boundary.  In this case, the branch locus may have components which are properly embedded arcs in $M$.  If $T$ is the branch locus, we require that the regular neighborhood $\mathcal{N}(T)$ have the property that on a component $V$ of $q^{-1}(\mathcal{N}(T))$ projecting to a neighborhood of an arc component of $T$, there is a homeomorphism $V \to D^2 \times I$ such that $q$ is modeled by the product of the $k$-fold branched cover $D^2 \to D^2$ with the identity map on $I$.  With the same requirement on circle components of $T$, and $\mathcal{E}(T)$ defined as before, we remark that Lemma \ref{here's yer cover} holds verbatim in this context.

Thurston constructed a hyperbolic manifold with totally geodesic boundary, which he called the ``tripus,'' from two hyperbolic truncated tetrahedra in Chapter 3 of his notes \cite{Th1}.  A description of the tripus as the complement of a genus two handlebody embedded in $S^3$ may be found there.  In \cite{ZP}, Paoluzzi-Zimmermann generalized this construction, constructing for each $n \geq 3$ and $k$ between $0$ and $n-1$ with $(2-k,n)=1$ a hyperbolic manifold $M_{n,k}$ with geodesic boundary, for which the tripus is $M_{3,1}$.  Ushijima extended Thurston's description of the tripus as a handlebody complement in $S^3$ to show that each $M_{n,1}$, $n \geq 3$, is homeomorphic to the exterior --- that is, the complement of a regular neighborhood --- of Suzuki's Brunnian graph $\theta_n$ \cite{Ush}.  In particular, the tetrus $M_{4,1}$ is the exterior of the graph $\theta_4$ pictured in Figure \ref{tetrus}.

\begin{figure}[ht]
\begin{center}
\input{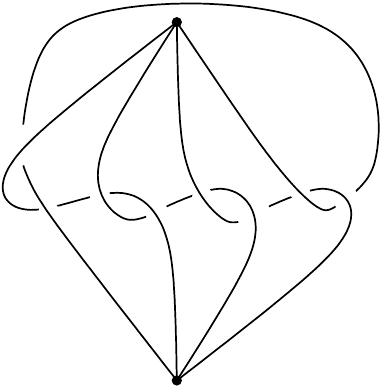_t}
\end{center}
\caption{$\theta_4$}
\label{tetrus}
\end{figure}

There is an order--4 automorphism of the tetrus visible in the figure as a rotation through a vertical axis intersecting $\theta_4$ only in its two vertices.  The quotient of $M_{4,1}$ by the group that this automorphism generates yields a branched cover $q_{4,1} \co M_{4,1} \to B^3$, branched over the tangle $T$ pictured in Figure \ref{O_ntangle}.  In fact Paoluzzi-Zimmermann describe, for each $n \geq 3$ and $k$ with $0 \leq k < n$ and $(2-k, n) = 1$, an $n$-fold branched cover $q_{n,k} \co M_{n,k} \to B^3$, branched over $T$ (see \cite[Figures 4 \& 5]{ZP}).  The quotient map $q_{n,k}$ may be realized by a local isometry to an \textit{orbifold} $O_n$ with geodesic boundary, with underlying topological space $B^3$ and singular locus $T$ with strings of cone angle $2\pi/n$.  We summarize Paoluzzi-Zimmermann's description of the orbifold fundamental group of $O_n$ and its relationship with the fundamental groups of the $M_{n,k}$ in the theorem below; this collects various results in \cite{ZP}.

\begin{thm1}[Paoluzzi-Zimmermann] For each $n \geq 3$, the orbifold fundamental group of $O_n$ is presented as
\[ E_n \cong \langle\,X_n,H_n\ |\ H_n^n = (H_nX_nH_nX_n^{-2})^n = 1\,\rangle. \]
Each elliptic element of $E_n$ is conjugate to exactly one of $H_n$ or $H_nX_nH_nX_n^{-2}$.  For $(2-k,n)=1$, the fundamental group $G_{n,k} := \pi_1(M_{n,k})$ is the kernel of the projection $\pi_k \co E_n \onto \mathbb{Z}_n = \langle h_n \rangle$ given by $\pi_k(X_n)=h_n^k$ and $\pi_k(H_n)=h_n$.  \end{thm1}

Here we have departed from the notation of Paoluzzi-Zimmermann in distinguishing between presentations of the orbifold fundamental group of $O_n$ for different $n$, so that Paoluzzi-Zimmermann's ``$x$'' is replaced above by our ``$X_n$'' and similarly for ``$h$'' in their presentation for $E_n$ in \cite[p. 120]{ZP}.  Also, we have renamed the generator of $\mathbb{Z}_n$ to $h_n$.

\begin{figure}[ht]
\begin{center}
\input{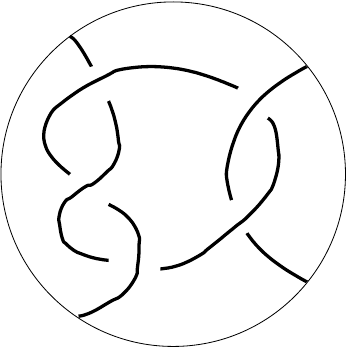_t.txt}
\end{center}
\caption{A tangle $T$ in the ball $B^3$}
\label{O_ntangle}
\end{figure}

In view of Proposition \ref{fibered} of the previous section, it is important that we have a description of the fundamental group of the exterior of $T$ in $B^3$.  Let $\mathcal{N}(T)$ be a regular neighborhood of $T$ in $B^3$.  Then $\mathcal{N}(T)$ has two components, each homeomorphic to $D^2 \times I$ in such a way that its intersection with $T$ is sent to $\{(0,0)\} \times I$ and its intersection with $\partial B^3$ to $D^2 \times \{0,1\}$.  Then take $\mathcal{E}(T) = \overline{B^3 - \mathcal{N}(T)}$, the exterior of $T$ in $B^3$, and let $\pi(T) = \pi_1(\mathcal{E}(T))$.  We refer by a \textit{meridian of $T$} to a curve on $\partial \mathcal{N}(T) \cap \mathcal{E}(T)$ of the form $\partial D^2 \times \{y\}$, for $y \in I$.  Below we summarize some facts about $\pi(T)$, which may be found for instance in joint work with Eric Chesebro \cite[\S 2]{CD}.

\begin{lemma} \label{CD stuff}  The group $\pi(T)$ is free on generators $x$ and $h$.  In $\pi(T)$, the meridians of $T$ are represented by $h$ and $hxhx^{-2}$, and the four-holed sphere $\partial B^3 \cap \mathcal{E}(T)$ is represented by $\Lambda = \langle h, hxhx^{-2}, (xhx)h^{-1}(xhx)^{-1} \rangle$ in $\pi(T)$.  \end{lemma}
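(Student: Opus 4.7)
The plan is to follow the approach of \cite[\S 2]{CD}: compute $\pi(T)$ directly from the diagram of $T$ in Figure \ref{O_ntangle}, and cross-check the answer against Paoluzzi-Zimmermann's presentation of $E_n$.

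First, I would show that $\pi(T)$ is free of rank two. The boundary of $\mathcal{E}(T)$ is a closed genus-two surface, obtained by capping off the four-holed sphere $\partial B^3 \cap \mathcal{E}(T)$ with the two meridional annuli of $\partial \mathcal{N}(T) \cap \mathcal{E}(T)$, and inspection of the diagram produces a spine of $\mathcal{E}(T)$ homotopy equivalent to a wedge of two circles, exhibiting $\mathcal{E}(T)$ as a genus-two handlebody. A consistency check comes from the orbifold: since $O_n$ is built from $(B^3, T)$ by declaring cone angle $2\pi/n$ along each strand, van Kampen for orbifolds yields
\[ E_n \;\cong\; \pi(T)\big/\langle\langle\, m_1^n,\, m_2^n\, \rangle\rangle, \]
where $m_1, m_2$ are meridians of the two strands. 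Matching this with the two-generator, two-relator presentation of $E_n$ forces $\pi(T)$ to be free on generators $x$ and $h$ projecting to $X_n$ and $H_n$ respectively, with meridians represented by $h$ and $hxhx^{-2}$.

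Second, I would compute the subgroup $\Lambda$. The surface $\partial B^3 \cap \mathcal{E}(T)$ is a four-holed sphere, so its fundamental group is free of rank three, and its image in $\pi(T)$ is generated by small loops around any three of the four endpoints of $T$. Choosing one endpoint per strand produces two loops which may be taken to represent $h$ and $hxhx^{-2}$ themselves. The third is a loop at the other endpoint of the first strand (the one whose meridian is $h$); as an element of $\pi(T)$ it is conjugate to $h^{-1}$, with the conjugator represented by an arc on the four-holed sphere connecting the two endpoints of the first strand. Tracing this arc through the planar picture of $T$ identifies the conjugator as $xhx$, yielding the third generator $(xhx)h^{-1}(xhx)^{-1}$.

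The main obstacle is this last identification: the explicit form of the conjugator has no meaning until $x$ and $h$ are normalized against a particular planar diagram of $T$, after which $xhx$ can be read off directly. The computation can be double-checked via the four-holed sphere relation, namely that the product of the four boundary loops (the three listed generators together with the missing fourth meridian conjugate) is trivial in $\pi_1$ of the four-holed sphere, hence in $\pi(T)$.
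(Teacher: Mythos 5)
The paper does not actually prove Lemma~\ref{CD stuff}; it simply cites \cite[\S 2]{CD} for these facts, so there is no internal argument to compare yours against. What you have written is essentially a reconstruction of a Wirtinger-style computation from the tangle diagram, and its overall shape is reasonable: recognize $\mathcal{E}(T)$ as a genus-two handlebody, pick a free basis adapted to the diagram, read off meridians as arc generators (and conjugates thereof), and identify $\Lambda$ as the image of the rank-three free group coming from the four-holed sphere.

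However, the middle step of your argument has a circularity problem that you should not let yourself slide past. After asserting that $\pi(T)$ is free of rank two, you invoke the orbifold presentation of $E_n$ as if it pins down both the choice of basis and the meridian words. It does not. Knowing $E_n \cong \pi(T)/\langle\langle m_1^n, m_2^n \rangle\rangle$ and knowing $E_n$ has a two-generator, two-relator presentation with relators $H_n^n$ and $(H_nX_nH_nX_n^{-2})^n$ tells you only that \emph{some} free basis of $\pi(T)$ projecting to $X_n, H_n$ makes the meridians land on $H_n$ and $H_nX_nH_nX_n^{-2}$ up to conjugacy and inversion; it does not single out the pair $h, hxhx^{-2}$ as the actual meridian representatives, nor does it tell you the conjugator $xhx$ in the third generator of $\Lambda$. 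Those identifications come only from tracking arcs and crossings in a fixed oriented diagram of Figure~\ref{O_ntangle}, choosing a basepoint on the four-holed sphere, and running the van Kampen/Wirtinger computation to the end. You flag this at the close of your proposal, but as written the argument reads as if the orbifold data does the work, when in fact it is only a consistency check on a computation you have not yet carried out. If you are going to prove the lemma from scratch rather than cite \cite{CD}, the diagram computation is the content, and the orbifold comparison belongs at the end as a sanity check rather than in the middle as a deduction.
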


Using Lemma \ref{CD stuff}, we reinterpret the Theorem of Paoluzzi-Zimmermann below in a way that matches our treatment of branched covers in Section \ref{sec:Thurston}.

\begin{lemma} \label{M_nkbranching}  For each $n \geq 3$ and $k$ with $(2-k,n)=1$, let $\Gamma_{n,k}$ be the kernel of the map $\pi_{n,k} \co \pi(T) \onto \mathbb{Z}_n=\langle h_n \rangle$ given by $x \mapsto h_n^k$, $h \mapsto h_n$, and let $q_{n,k} \co \mathcal{E}_{n,k} \rightarrow \mathcal{E}(T)$ be the cover corresponding to $\Gamma_{n,k}$.  Then $q_{n,k}$ extends to the branched covering $q_{n,k} \co M_{n,k} \to B^3$ described by Paoluzzi-Zimmermann, where $M_{n,k}$ is obtained from $\mathcal{E}_{n,k}$ by filling each component of the preimage of $\partial \mathcal{N}(T) \cap \mathcal{E}(T)$ with a copy of $D^2 \times I$.  \end{lemma}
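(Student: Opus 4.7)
The plan is to factor the map $\pi_{n,k}$ through a presentation of the orbifold group $E_n$, so that the cover $\mathcal{E}_{n,k} \to \mathcal{E}(T)$ is identified with the restriction to the manifold part of Paoluzzi-Zimmermann's orbifold cover $M_{n,k} \to O_n$, and then to check that the completion by $D^2 \times I$ blocks matches the local model of the branched cover near $T$.

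First I would set up the comparison between $\pi(T)$ and $E_n$. The orbifold $O_n$ has underlying space $B^3$ and singular locus $T$ with cone angle $2\pi/n$ along each arc, so its orbifold fundamental group is obtained from $\pi(T)$ by adjoining the relations $\mu^n = 1$ for each meridian $\mu$. By Lemma \ref{CD stuff}, $\pi(T)$ is free on $x,h$ with meridians represented by $h$ and $hxhx^{-2}$, so $\pi(T)/\langle\langle h^n,(hxhx^{-2})^n\rangle\rangle$ is presented exactly as $E_n$ in the Paoluzzi-Zimmermann theorem. This yields a canonical surjection $\iota \co \pi(T) \onto E_n$ with $x\mapsto X_n$ and $h \mapsto H_n$.

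Next I would observe that the map $\pi_{n,k}$ defined in the lemma statement factors as $\pi_k \circ \iota$, where $\pi_k$ is the surjection from the Paoluzzi-Zimmermann theorem. This is a direct check on the free generators. Moreover, both meridian images $\pi_{n,k}(h)=h_n$ and $\pi_{n,k}(hxhx^{-2})=h_n^{2-k}$ have order $n$ in $\mathbb{Z}_n$ (using $(2-k,n)=1$), which confirms both that $\pi_{n,k}$ is onto and that it kills the relations of $E_n$, so the factorization is legitimate. Consequently $\Gamma_{n,k}=\ker\pi_{n,k}=\iota^{-1}(G_{n,k})$.

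From the theory of orbifold covers, the cover $M_{n,k}\to O_n$ associated to $G_{n,k}<E_n$ restricts over the manifold part $\mathcal{E}(T)\subset O_n$ to the ordinary cover associated to the corresponding subgroup of $\pi(T)$, namely $\Gamma_{n,k}$. Hence $\mathcal{E}_{n,k}$ is naturally identified with $q_{n,k}^{-1}(\mathcal{E}(T))$ inside the Paoluzzi-Zimmermann branched cover. To finish I would check the filling description at each arc component: the local model of $q_{n,k}$ near a neighborhood $V\cong D^2\times I$ of an arc of $T$ is the product of the $n$-fold branched cover $D^2\to D^2$ with $\mathrm{id}_I$, which gives a single $D^2\times I$ whose boundary annulus $\partial D^2\times I$ covers $\partial V\cap \mathcal{E}(T)$ by the degree-$n$ cover of $\partial D^2$ times $\mathrm{id}_I$. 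Since both meridians have order $n$ in $\mathbb{Z}_n$, exactly one such block lies over each component of $\mathcal{N}(T)$, and together they reconstitute $M_{n,k}$ from $\mathcal{E}_{n,k}$ by Dehn filling along the preimage meridians, as asserted.

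The main obstacle is essentially bookkeeping: ensuring that Lemma \ref{CD stuff}'s specific meridians $h$ and $hxhx^{-2}$ correspond under $\iota$ to the elements $H_n$ and $H_n X_n H_n X_n^{-2}$ which Paoluzzi-Zimmermann single out as the elliptic generators of $E_n$, and that the $(2-k,n)=1$ condition is precisely what makes the second meridian's image also have full order $n$ so that the branching is honestly $n$-fold on both strings.
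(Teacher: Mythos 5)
Your proposal is correct and follows essentially the same route as the paper's proof: factor $\pi_{n,k}$ through the surjection $\pi(T) \onto E_n$ sending $x \mapsto X_n$, $h \mapsto H_n$, use $(2-k,n)=1$ to see both meridian classes $h$ and $hxhx^{-2}$ map to generators of $\mathbb{Z}_n$ (hence connected preimages of degree $n$), and check the $D^2\times I$ filling matches the local branched-cover model. The only cosmetic difference is ordering: the paper first establishes connectivity of meridian preimages and extends across the fillings, then identifies the resulting domain with $M_{n,k}$ by observing $G_{n,k}$ is the quotient of $\Gamma_{n,k}$ by the normal closure of $\{h^n,(hxhx^{-2})^n\}$; you instead lead with the factorization $\pi_{n,k}=\pi_k\circ\iota$ and invoke the restriction of the orbifold cover to the manifold part, which is the same identification read in the opposite direction.
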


\begin{proof}  The homomorphism $\pi_{n,k}$ described in the lemma takes the elements $h$ and $hxhx^{-2}$ to $h_n$ and $(h_n)^{2-k}$, respectively, each of which generates $\mathbb{Z}_n$ when $2-k$ is relatively prime to $n$.  Since the meridians of $\mathcal{E}(T)$ are represented by $h$ and $hxhx^{-2}$, it follows that each meridian has connected preimage in $\mathcal{E}_{n,k}$, represented in $\Gamma_{n,k}$ by $h^n$ and $(hxhx^{-2})^n$, respectively.

If $U$ is a component of $\partial \mathcal{N}(T) \cap \mathcal{E}(T)$, then it is homeomorphic to $\partial D^2 \times I$, and by the paragraph above $(q_{n,k})^{-1}(U)$ is a connected $n$-fold cover of $U$, modeled by the product of the $n$-fold cover $\partial D^2 \to \partial D^2$ with the identity map $I \to I$.  Thus after filling each component of $\partial \mathcal{N}(T) \cap \mathcal{E}(T)$ and its preimage under $q_{n,k}$ with a cylinder $D^2 \times I$, $q_{n,k}$ extends to a branched cover modeled on the cylinders by the product of the standard $n$-fold branched cover $D^2 \to D^2$ with the identity map $I \to I$.

By our descriptions of $\mathcal{N}(T)$ and $\mathcal{E}(T)$, the image of this branched cover is homeomorphic to $B^3$, with branching locus $T$.  We claim that the domain is homeomorphic to $M_{n,k}$.  There is a map from $\pi(T)$ onto the orbifold group $E_n$ given by sending $x$ and $h$ to $X_n$ and $H_n$, respectively.  Then $\pi_{n,k}$ factors as this projection followed by the map $\pi_k$ defined by Paoluzzi-Zimmermann.  The description of $E_n$ thus implies that $G_{n,k}$ is the quotient of $\Gamma_{n,k}$ by the normal closure of $\{h^n,(hxhx^{-2})^n\}$, and the claim follows.
\end{proof}

The \textit{double} branched cover $M_2 \to B^3$, branched over $T$, was not addressed in \cite{ZP} since it does not admit the structure of a hyperbolic manifold with totally geodesic boundary.  In fact, it is homeomorphic to the trefoil knot exterior, and we will describe its Seifert fibered structure and the preimage of $L$ in detail in Section \ref{sec:fibering}.  Below we give a description consistent with that of Lemma \ref{M_nkbranching}.

\begin{lemma} \label{factors through}  Let $\Gamma_2$ be the kernel of the map $\pi_2 \co \pi(T) \onto \mathbb{Z}_2 = \{0,1\}$ given by $x \mapsto 1$, $h \mapsto 1$, and let $q_2 \co \mathcal{E}_2 \to \mathcal{E}(T)$ be the cover corresponding to $\Gamma_2$.  Then $q_2$ extends to the unique twofold branched cover $q_2 \co M_2 \rightarrow B^3$, after filling components of the preimage of $\partial \mathcal{N}(T) \cap \mathcal{E}(T)$ with copies of $D^2 \times I$.  \end{lemma}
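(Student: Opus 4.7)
The plan is to mimic the proof of Lemma \ref{M_nkbranching}, using Lemma \ref{CD stuff} to identify the meridians of $T$ in $\pi(T)$ with $h$ and $hxhx^{-2}$. First I would check that both meridians map to the nontrivial element of $\mathbb{Z}_2$ under $\pi_2$. For $h$ this is immediate from the definition, and for $hxhx^{-2}$ one computes in the abelianization that the image is $1 + 1 + 1 - 2 \equiv 1 \pmod 2$. Hence each meridian has connected preimage in $\mathcal{E}_2$, and each component of $(q_2)^{-1}(\partial \mathcal{N}(T) \cap \mathcal{E}(T))$ is a connected twofold cover of the corresponding component of $\partial \mathcal{N}(T) \cap \mathcal{E}(T) \cong \partial D^2 \times I$, modeled as the product of the connected double cover $\partial D^2 \to \partial D^2$ with the identity on $I$.

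Next, I would fill each such component and its preimage with matching copies of $D^2 \times I$ and extend $q_2$ across each filling as the product of the standard twofold branched cover $D^2 \to D^2$ with the identity on $I$. By construction the resulting map is a twofold branched cover, in the sense described at the beginning of this section, from the filled cover onto $B^3$ with branch locus $T$.

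For the uniqueness clause, I would observe that any twofold branched cover of $B^3$ branched over $T$ is determined by an index-two subgroup of $\pi(T)$ containing no meridian of $T$; condition (2) in the definition of a branched covering forces the local degree on each arc of $T$ to be $k = 2$, equivalently that the class of each meridian maps nontrivially to $\mathbb{Z}_2$. Since $\pi(T)$ is free on $\{x,h\}$ by Lemma \ref{CD stuff}, such a subgroup is the kernel of a surjection $F(x,h) \onto \mathbb{Z}_2$, determined by the images of $x$ and $h$. The conditions $h \mapsto 1$ and $hxhx^{-2} \mapsto 1$ together force $x \mapsto 1$ as well, so $\pi_2$ is the unique such homomorphism and $q_2$ is the unique twofold branched cover. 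I expect no serious obstacle; the only mildly subtle point is the uniqueness argument, which reduces to linear algebra over $\mathbb{Z}_2$ once one notes the equivalence between condition (2) and the nontriviality of each meridian's image.
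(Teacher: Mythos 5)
Your proposal is correct and takes essentially the same approach as the paper: verify that both meridian representatives $h$ and $hxhx^{-2}$ map to the generator of $\mathbb{Z}_2$, extend $q_2$ across fillings as in Lemma~\ref{M_nkbranching}, and derive uniqueness from the fact that any twofold branched cover must send every meridian class nontrivially to $\mathbb{Z}_2$, which pins down the images of $x$ and $h$. The paper's proof is terser (it only spells out the uniqueness argument, citing the construction implicitly), and it writes the second meridian image as $h^2x^{-1}$, which agrees with $hxhx^{-2}$ modulo the abelianization to $\mathbb{Z}_2$, but the substance is the same.
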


\begin{proof}  Let $q \co M \rightarrow B^3$ be a twofold branched cover with branch locus $T$.  The associated cover $q \co \mathcal{E} \rightarrow \mathcal{E}(T)$ corresponds to a subgroup $\Gamma < \pi(T)$ which is of index 2 and hence normal.  Each element of $\pi(T)$ representing a meridian of $T$ must map nontrivially under the quotient $\pi(T) \to \pi(T)/\Gamma_2 \simeq \mathbb{Z}_2$, since $q$ branches nontrivially over each component of $T$.  By the description in Lemma \ref{CD stuff}, it follows that $h$ and $h^2x^{-1}$ map nontrivially, hence that each of $h$ and $x$ map to the generator.  Thus the only twofold branched cover of $B^3$, branched over $T$, is $q_2 \co M_2 \to B^3$ as described in the lemma.  \end{proof}

The uniqueness of $q_2 \co M_2 \rightarrow B^3$ implies that each $q_{n,k} \co M_{n,k} \to B^3$, where $n$ is even, factors through $q_2$.  We will take advantage of this fact below.

%%%%%%%%%%%%%%%%%%%%%%%%%%%%%%%%%%%%%%%%%%%%%

\section{Virtually fibering the doubled tetrus} \label{sec:fibering}

To construct a fibered cover for $DM_{4,1}$ using the methods of Section \ref{sec:Thurston}, we first describe a trivially fibered cover $p' \co M' \rightarrow M_2$ and in it, the preimage of the tangle $T$ of Figure \ref{O_ntangle}.  This uses a description of $T$ as a sum of \textit{rational tangles}, introduced by Conway \cite{Conway}.  $T$ is represented as $3\,0 + 2\,0$ in Conway's notation (see \cite[Fig. 1,2,3]{Conway}), where $3\,0$ and $2\,0$ associate to the rational numbers $1/3$ and $1/2$, respectively.  We will refer to $T$ as the \textit{Montesinos tangle} $T(1/3,1/2)$, referring to Montesinos' construction \cite{Mont} of twofold branched covers of $S^3$, branched over links built as sums of rational tangles.  Below we give an ad hoc version of this construction which suits our purposes.

Let $V = D^2 \times S^1$ be the solid torus, embedded in $\mathbb{C}^2$ as the cartesian product of the unit disk in $\mathbb{C}$ with its boundary, and oriented as a product of the standard orientation on $D^2$ with the boundary orientation $S^1$ inherits as $\partial D^2$.  Define the \textit{complex conjugation-induced} involution of $V$ by $(z,w) \mapsto (\bar{z},\bar{w})$.  The fixed set is $S = \{(r,\pm 1)\ |\ r \in [-1,1]\,\}$, a disjoint union of two arcs properly embedded in $V$.  The quotient map $q \co V \rightarrow V/((z,w) \sim (\bar{z},\bar{w})) \cong B^3$ is a twofold branched covering with branching locus $S$.  Each meridian disk $D^2 \times \{\pm1\}$ of $V$ is mapped by $q$ to a disk in $B^3$ which determines an isotopy rel endpoints between an arc of $q(S)$ and an arc on $\partial B^3$.  That is, $q(S)$ is the trivial two--string tangle $B^3$.

A rational number $p/q$ in lowest terms determines a Seifert fibering of $V$, with an \textit{exceptional fiber} parametrized by $\gamma_0(t)= (0,e^{2\pi it})$, $t \in I$, and \textit{regular fibers} parametrized by 
\begin{align}  \label{fiber param}  \gamma_z(t) = \left(z\,e^{2\pi i\cdot pt},e^{2\pi i \cdot qt} \right),\ \ t \in I, \end{align}
for $z \in D^2-\{0\}$.  Then $z$ and $w$ in $D^2$ determine the same fiber if and only if $w = ze^{2\pi ik/q}$ for some $k \in \mathbb{Z}$.  Hence for any $w \in S^1$, the quotient map sending each fiber to a point restricts on $D^2 \times \{w\}$ to a $q$-fold branched covering, branched at the origin.  We let $V_{p/q}$ denote $V$ equipped with this fibering, and divide $\partial V_{p/q}$ into annuli $A_{p/q}$ and $B_{p/q}$, parametrized as follows.  Define a model annulus $A = I \times I /(x,0) \sim (x,1)$, inheriting a ``vertical'' fibering by circles from arcs $\{x\} \times I$, a ``horizontal'' fibering from arcs of the form $I \times \{y\}$, and an orientation from the standard orientation on $I \times I$.  Define $\phi_{p/q} \co A \rightarrow \partial V_{p/q}$ by \begin{align*}
  \phi_{p/q}(x,y) = \left(e^{2\pi i \left(\frac{1-2x}{4q}\right)}e^{2\pi i\cdot py}, e^{2\pi i\cdot qy} \right).  \end{align*}
Choose $a$ and $b$ such that $ap+bq = 1$, and define $\psi_{p/q} \co A \to \partial V_{p/q}$ by \begin{align*}
  \psi_{p/q}(x,y) = \left(e^{2\pi i \left(\frac{2x+1}{4q}\right)}e^{2\pi i \cdot p\left(y-\frac{a}{2q}\right)}, e^{2\pi i \cdot q\left(y-\frac{a}{2q}\right)}\right).
\end{align*}  
 Then $\phi_{p/q}$ and $\psi_{p/q}$ have the following properties.  \begin{enumerate}
  \item Taking $A_{p/q} \doteq \phi_{p/q}(A)$ and $B_{p/q} \doteq \psi_{p/q}(A)$, we have $A_{p/q} \cup B_{p/q} = \partial V_{p/q}$, and $A_{p/q} \cap B_{p/q} = \phi_{p/q}(\partial A) = \psi_{p/q}(\partial A)$.
  \item  Each of $\phi_{p/q}$ and $\psi_{p/q}$ takes a vertical fiber of $A$ to a Seifert fiber of $V_{p/q}$ and a horizontal fiber to a closed arc in $\partial D^2 \times \{y\}$ for some $y \in S^1$.
  \item  Giving $A_{p/q}$ and $B_{p/q}$ the boundary orientations from $V_{p/q}$, $\phi_{p/q}$ reverses and $\psi_{p/q}$ preserves orientation.
  \item  The complex conjugation-induced involution on $V_{p/q}$ commutes with the map $(x,y) \mapsto (1-x,1-y)$ under each of $\phi_{p/q}$ and $\psi_{p/q}$.  In particular, each of $A_{p/q}$ and $B_{p/q}$ contains two points of $S \cap \partial V_{p/q}$, and $(1,1) \in A_{p/q}$.  \end{enumerate}

\begin{figure}
\begin{center}
\input{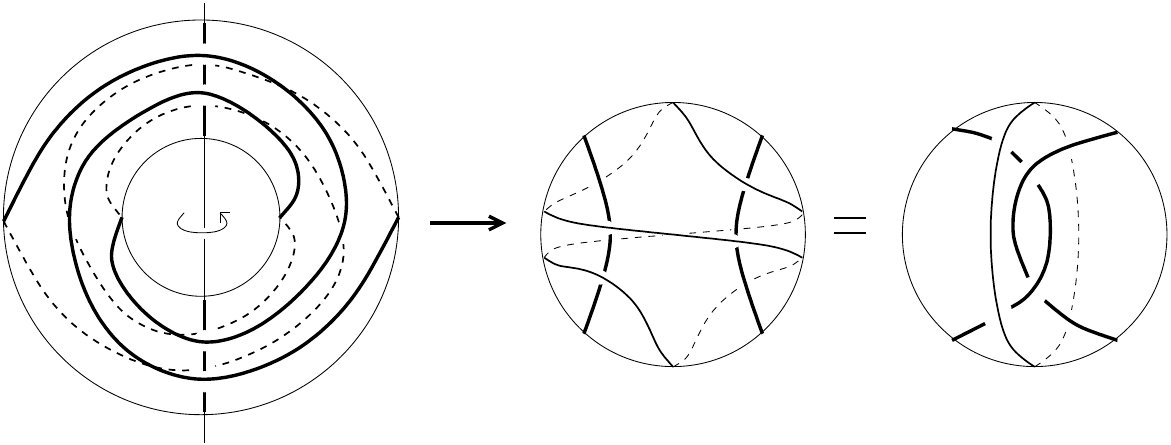_t}
\end{center}
\caption{The double branched cover of the rational tangle $1/2$.}
\label{doublebranched}
\end{figure}

The map $q$ determines a double branched cover of $V_{1/2}$ to the ball, branched over the rational tangle $1/2$, as illustrated in Figure \ref{doublebranched}.  In the figure, the two parallel simple closed curves comprising $A_{1/2} \cap B_{1/2}$ are drawn on $\partial V_{1/2}$, projecting to the boundary of the indicated disk on $B^3$.  A similar picture holds for the double branched cover of $V_{1/3}$ to the $1/3$ rational tangle.  An appeal to Property 4 of the parametrizations above thus yields the following lemma.

\begin{lemma}  \label{twofold}  Define $M_2 = V_{1/3} \cup_{\phi_2 \psi_3^{-1}} V_{1/2}$.  There is a branched cover $q_2 \co M_2 \rightarrow B^3$, branched over $T$, which restricts on each $V_i$ to $q$.  \end{lemma}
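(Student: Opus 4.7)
The plan is to construct $q_2$ from the two involutions $\iota_{1/3}$ on $V_{1/3}$ and $\iota_{1/2}$ on $V_{1/2}$ given by complex conjugation. First I would check that the gluing map identifies these involutions along the shared annulus. By Property 4 of the parametrizations, each of $\phi_{1/2}$ and $\psi_{1/3}$ conjugates the restriction of its respective involution to the involution $\sigma \co (x,y) \mapsto (1-x,1-y)$ on the model annulus $A$. Hence the gluing homeomorphism $\phi_{1/2}\psi_{1/3}^{-1} \co B_{1/3} \to A_{1/2}$ intertwines $\iota_{1/3}|_{B_{1/3}}$ with $\iota_{1/2}|_{A_{1/2}}$, so the two involutions assemble into a well-defined involution $\iota$ on $M_2$.

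Next I would identify the quotient $M_2/\iota$ with $B^3$. Each $V_{1/q}/\iota$ is $B^3$ by the setup, and the images $q(B_{1/3})$ and $q(A_{1/2})$ are embedded disks in these balls, being quotients of annuli by $\sigma$. The gluing descends to a homeomorphism of these disks, and the union of two $3$-balls along a common boundary disk is again a $3$-ball. The induced quotient map $q_2 \co M_2 \to M_2/\iota \cong B^3$ restricts to $q$ on each $V_i$ because $\iota|_{V_i}$ is by construction the complex conjugation-induced involution.

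Finally I would identify the branch locus in $B^3$ with $T$. The fixed set of $\iota$ on $V_{1/q}$ is the arc pair $S$, which projects under $q$ to the rational tangle $1/q$ in $B^3$. Property 4 implies that the four points of $S \cap \partial V$ split evenly between $A_{p/q}$ and $B_{p/q}$, and that $(1,1) \in A_{p/q}$, so after the quotient and gluing the endpoints of the two rational tangles match up on the common disk in a way consistent with Conway's tangle addition. This identifies the branch locus as the sum $1/3 + 1/2$, which is precisely the Montesinos tangle $T = T(1/3,1/2)$.

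The main obstacle will be verifying the final step, namely that the specific parametrizations $\phi_{1/2}$ and $\psi_{1/3}$ produce exactly the tangle $T$ of Figure \ref{O_ntangle} and not some mutant. This requires tracking the four branch points on the gluing four-holed sphere through the explicit formulas for $\phi_{1/2}$ and $\psi_{1/3}$ and checking that their cyclic order and pairing on the quotient disk match the standard Montesinos sum diagram. The first two steps are essentially formal consequences of Property 4 and the fact that gluing balls along a disk yields a ball, so the content of the lemma really lies in the tangle identification.
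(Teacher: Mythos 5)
Your proposal is correct and follows essentially the same route as the paper, which in fact offers no written proof of Lemma~\ref{twofold} at all: it displays Figure~\ref{doublebranched} (the double branched cover of $V_{1/2}$), remarks that a similar picture holds for $V_{1/3}$, and then says ``an appeal to Property~4 of the parametrizations above thus yields the following lemma.'' Your three steps --- (i) Property~4 makes the complex-conjugation involutions compatible across the gluing annulus, so they assemble into an involution $\iota$ on $M_2$; (ii) the quotient is two balls glued along a common boundary disk, hence $B^3$; (iii) the branch locus is the tangle sum $T(1/3,1/2) = T$ --- are exactly the implicit content the paper is gesturing at, and you have correctly singled out step~(iii) as the part that actually requires inspection (via the explicit formulas for $\phi_{1/2}$ and $\psi_{1/3}$, or equivalently by reading Figure~\ref{doublebranched}). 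One small imprecision: the surface along which the two balls are glued in the quotient is a disk containing two branch points (the image of $B_{1/3} \cong A_{1/2}$ under $q$), not a four-holed sphere; the four-holed sphere $\partial B^3 \cap \mathcal{E}(T)$ only appears after the gluing, as the complement of the four branch points in $\partial(M_2/\iota) \cong S^2$. This does not affect the validity of your argument, since the check you describe --- that the two pairs of branch points match up on the gluing disk in the manner prescribed by tangle addition --- is exactly what is needed.
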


Property 2 of the parametrizations $\psi_3$ and $\phi_2$ implies that $M_2$ inherits the structure of a Seifert fibered space from $V_{1/3}$ and $V_{1/2}$.  The lemma below describes a foliation of $M_2$ by surfaces, each meeting each Seifert fiber transversely.

\begin{lemma} \label{hor sfc}  Let $H_m = D^2 \times \{e^{2\pi i\frac{m-1}{2}}\} \subset V_3$, $m = 0,1$, and $S_n = D^2 \times \{e^{2\pi i \frac{n}{3}}\} \subset V_2$, $n = 0,1,2$.  Then $F = \left( \bigcup H_m \right) \cup \left( \bigcup S_n \right)$ is a connected surface homeomorphic to a one-holed torus, which is a fiber in the fibration of $M_2$ that restricts on $V_{1/3}$ or $V_{1/2}$ to the foliation by disks $D^2 \times \{y\}$.  A map $\sigma \co F \rightarrow F$ is determined by the following combinatorial data: $\sigma(H_m) = H_{1-m}$ for $m = 0,1$, and $\sigma(D_n) = D_{n+1}$ for $n = 0,1,2$ (take $n+1$ modulo three), so that $M \cong F \times [0,1]/((x,0) \sim (\sigma(x),1))$. \end{lemma}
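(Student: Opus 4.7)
The plan is to build the fibration explicitly by defining a smooth surjection $\tilde f \co M_2 \to S^1$ whose regular fibers are precisely the proposed surfaces, and then to read off the monodromy from the rotation action on the base. Set $\tilde f(z,w) = w^2$ on $V_{1/3}$ and $\tilde f(z,w) = w^3$ on $V_{1/2}$; the exponents $2$ and $3$ record the number of sheets of $\tilde f|_{V_{p/q}}$ and are chosen so that $\tilde f^{-1}(1)$ contains exactly the disks $H_0,H_1$ and $S_0,S_1,S_2$. The first substantial step is to verify that $\tilde f$ descends to $M_2$. Using the explicit parametrizations, a point $\psi_{1/3}(x,y) \in B_{1/3}$ has $V_{1/3}$-coordinate $w = e^{2\pi i\cdot 3(y-1/6)}$, hence $w^2 = e^{12\pi i y}$, while the glued point $\phi_{1/2}(x,y) \in A_{1/2}$ has $w = e^{4\pi i y}$ with $w^3 = e^{12\pi i y}$; the two expressions agree. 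Since the $w$-derivative of $\tilde f$ is nonvanishing throughout each solid torus, $\tilde f$ is a submersion and hence a locally trivial fibration over $S^1$.

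By construction, the restriction of $\tilde f$ to $V_{p/q}$ is projection to the $S^1$-factor followed by a $q$-fold self-cover of $S^1$, so a fiber of $\tilde f$ meets each solid torus in a disjoint union of disks $D^2 \times \{y\}$. Taking $F = \tilde f^{-1}(1)$ therefore recovers the surface in the statement. Using Property 2 of $\phi_{p/q}$ and $\psi_{p/q}$, horizontal fibers of the model annulus $A$ are sent to arcs of the form $\partial D^2 \times \{y\}$, so the boundary circles of the $H_m$ match boundary circles of the $S_n$ across the gluing $B_{1/3} \to A_{1/2}$; a direct bookkeeping check produces exactly one identification arc between each $H_m$ and each $S_n$, for six gluings among five disks, giving $\chi(F) = -1$ and showing $F$ is connected.

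The main obstacle is to confirm that $F$ is a once-punctured torus and not a thrice-punctured sphere. The direct route is to track the twelve horizontal arcs of $\partial F$ lying on $\partial M_2 = A_{1/3} \cup B_{1/2}$ through the identification of $\partial A_{1/3}$ with $\partial B_{1/2}$ induced by $\phi_{1/2}\psi_{1/3}^{-1}$; matching the endpoints via the explicit parametrizations exhibits these arcs as a single simple closed curve, so $\partial F$ is connected and $F$ is the one-holed torus. This is consistent with the announced fact that $M_2$ is the trefoil knot exterior, a once-punctured-torus bundle over $S^1$.

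Finally, the monodromy is immediate from the construction: traversing the base $S^1$ once multiplies $w$ by $e^{i\pi}$ on $V_{1/3}$, transposing $H_0 \leftrightarrow H_1$, and by $e^{2\pi i/3}$ on $V_{1/2}$, cycling $S_0 \mapsto S_1 \mapsto S_2 \mapsto S_0$. This matches the stated $\sigma$ and yields the mapping torus description $M_2 \cong F \times [0,1]/((x,0) \sim (\sigma(x),1))$.
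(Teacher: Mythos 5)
Your approach is genuinely different from the paper's and is, in several respects, cleaner.  The paper argues indirectly, via Property~2 of the parametrizations, that the disk foliations of $V_{1/3}$ and $V_{1/2}$ glue to a foliation of $M_2$, then reads off the identification pattern of $F$ and the monodromy from Figure~\ref{horsfc} and a first-return map along Seifert fibers.  You instead write down an explicit global submersion $\tilde f$, verify its descent to $M_2$ by computing the $w$-coordinates of $\psi_{1/3}$ and $\phi_{1/2}$ (this computation is correct: both give $e^{12\pi i y}$), and read the combinatorics of $F$ and $\sigma$ directly from $\tilde f$.  The explicit Euler characteristic count $\chi(F) = 5 - 6 = -1$, via the observation that the gluing graph is $K_{2,3}$, is an improvement in rigor over the paper's appeal to the figure.

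There is, however, a real gap exactly where you locate it: distinguishing the one-holed torus from the thrice-punctured sphere.  The Euler characteristic and connectedness leave both possibilities open, and the distinction is carried entirely by whether the twelve boundary arcs close up into one circle or three.  You assert that ``matching the endpoints via the explicit parametrizations exhibits these arcs as a single simple closed curve'' but do not actually do it; this is the heart of the topological identification, and an assertion is not a proof.  (The ``consistency with the trefoil exterior'' remark does not close the gap unless you are willing to cite that identification, which the paper states but also does not prove.)  The paper dispatches this step by displaying the gluing pattern in Figure~\ref{horsfc} so the reader can trace $\partial F$ directly; to make your argument self-contained you would need to carry out the endpoint-tracking, which amounts to computing the restriction of $\phi_{1/2}\psi_{1/3}^{-1}$ to $\partial A$ and following the twelve boundary arcs around.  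Two smaller points: the phrase ``a $q$-fold self-cover'' clashes with the $V_{p/q}$ notation, since $\tilde f = w^2$ on $V_{1/3}$ (Seifert invariant $q=3$) and $w^3$ on $V_{1/2}$ (Seifert invariant $q=2$); and the monodromy argument implicitly uses the $w$-rotation as a horizontal lift, but the pullbacks of that rotation from $V_{1/3}$ and $V_{1/2}$ do not agree on the gluing annulus (only their $\partial_y$-components do), so one needs to interpolate the $\partial_x$-components.  This interpolation lies in $\ker d\tilde f$ and so does not affect the claimed disk permutation, but the paper's use of the Seifert-fiber first-return map avoids the issue altogether.
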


\begin{proof}  By Property 2 of $\phi_{1/2}$ and $\psi_{1/3}$, for each $y \in S^1$, the gluing map $\phi_{1/2} \psi_{1/3}^{-1}$ takes components of $\partial D^2 \times \{y\} \cap B_{1/3} \subset V_{1/3}$ to components of $\partial D^2 \times \{y'\} \cap A_{1/2} \subset V_{1/2}$, for $y'$ determined by $y$.  It follows that the foliations of $V_{1/3}$ and $V_{1/2}$ by disks of the form $D^2 \times \{y\}$ join in $M_2$ to yield a foliation by surfaces.  We take $F$ to be the surface in this foliation containing $D^2 \times \{1\}$ in $V_{1/3}$, and illustrate its combinatorics in Figure \ref{horsfc}.  

\begin{figure}[ht]
\begin{center}
\input{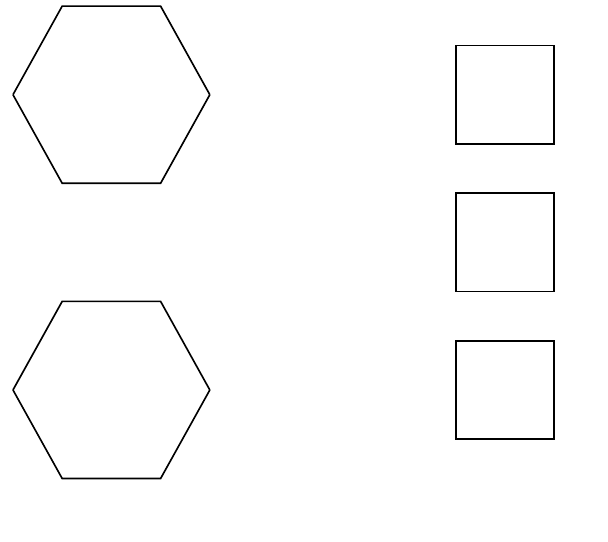_t.txt}
\end{center}
\caption{The surface $F$ in $M_2$.}
\label{horsfc}
\end{figure}

We depict the disks $H_m$ as hexagons, since each intersects each of $A_{1/3}$ and $B_{1/3}$ in three arcs of its boundary.  Applying $\psi_{1/3}^{-1}$ to a component of $H_m \cap B_{1/3}$ yields an arc of the form $I \times \{h\}$ for some $h \in I$; in the figure, we have labeled each arc of $H_m \cap B_{1/3}$ by the corresponding $h$.  Each ``square'' $S_n$ intersects $A_{1/2}$ in two arcs of its boundary, labeled in Figure \ref{horsfc} by the height of their images under $\phi_{1/2}^{-1}$.  We assign each $H_m$ or $S_n$ the standard orientation from $D^2$, and picture them in Figure \ref{horsfc} with the orientation inherited from the page.  Then each labeled edge of $H_m$ is identified to that of $S_n$ with the same label, in  orientation--reversing fashion, by $\phi_{1/2}\psi_{1/3}^{-1}$.  Their union $F$ is now easily identified as a one--holed torus.  

Since the Seifert fibers of $V_{1/3}$ and $V_{1/2}$ are transverse to the disks $D^2 \times \{y\}$, Seifert fibers of $M_2$ transversely intersect each surface in the foliation described above.  There is a quotient map $\pi$, taking $M_2$ to a closed one-manifold (that is, $S^1$), determined by crushing each surface in the foliation described above to a point.   Then $F$ is the preimage under $\pi$ of a point in $S^1$, so cutting $M_2$ along $F$ yields a surface bundle over $I$, necessarily of the form $F \times I$.  With $F$ oriented as prescribed above, the normal orientation to $F$ in $M_2$ is the upward direction along Seifert fibers parameterized as in (\ref{fiber param}).  The ``first return map'' $\sigma \co F \to F$ is prescribed as follows: from $x \in F$, move in the normal direction along the Seifert fiber through $x$ until it again intersects $F$; the point of intersection is $\sigma(x)$.  It describes a monodromy for the description of $M_2$ as a bundle over $S^1$.

From the description of $\sigma$ and the fiber-preserving parameterizations $\psi_{1/3}$ and $\phi_{1/2}$, we find that for a point $x$ on an arc labeled by $h$ in Figure \ref{horsfc}, $\sigma(x)$ is the corresponding point on the arc labeled by $h+1/6$ (modulo $1$).  This models the behavior of $\sigma(x)$ on the regular fibers.  The intersection of $F$ with the singular fiber in $V_{1/3}$ is the disjoint union of the centers of $H_0$ and $H_1$, which are thus interchanged by $\sigma$.  An analogous description holds for the intersection of $F$ with the singular fiber in $V_{1/2}$, yielding the description of $\sigma$ in the statement of the lemma.  Now since $V_{1/3}$ is cut by its intersection with $F$ into two cylinders, and $V_{1/2}$ is cut into three, and these are identified along vertical annuli in their boundaries by $\phi_{1/2}\psi_{1/3}^{-1}$ to form $M_2$ cut along $F$, the description of $M_2$ as a fiber bundle with fiber $F$ follows.  

By construction, the components of $q_2^{-1}(T)$ in $M_2$ intersect each of $V_{1/3}$ and $V_{1/2}$ in the fixed set of the complex conjugation-induced involution.  In $V_{1/3}$, the component $[-1,1] \times \{-1\}$ is contained in $H_0$, and $[-1,1] \times \{1\} \subset H_1$.  In $V_{1/2}$, the component $[-1,1] \times \{1\}$ is contained in $S_0$, and $\phi_{1/2}\psi_{1/3}^{-1}$ takes its endpoints to endpoints of the components in $V_{1/3}$.  \end{proof}

Note that $q_2^{-1}(T)$ intersects each of $V_{1/3}$ and $V_{1/2}$ in the fixed locus of the complex conjugation-induced involution.  Since the components of this locus lie in the disks $D^2 \times \{\pm 1\}$, it follows that $q_2^{-1}(T)$ lies in fibers of the fibration $M_2 \to S^1$ described in Lemma \ref{hor sfc}.  The description of Lemma \ref{hor sfc} also implies that $\sigma$ is periodic with order 6.  Therefore $M_2$ has a sixfold cover $p' \co M' \rightarrow M_2$ which is trivially fibered; that is, $M' \cong F \times S^1$, such that components of $(q_2 \circ p')^{-1}(T)$ lie in disjoint fibers.

The uniqueness property of Lemma \ref{factors through} implies that the manifolds $M_2$ described there and in Lemma \ref{twofold} are homeomorphic as branched covers.  In particular, the map $q_{4,1} \co M_{4,1} \to B^3$ described in Lemma \ref{M_nkbranching} factors through $q_2$ described in Lemma \ref{twofold}.  Let $q_{2,1} \co M_{4,1} \to M_2$ be the map such that $q_{4,1} = q_{2,1} \circ q_2$.  Then $q_{2,1}$ is a twofold branched cover, branched over $(q_2)^{-1}(T)$.  

Recall, from the first paragraph of Section \ref{sec:tetrus}, that Lemma \ref{here's yer cover} applies in the context of manifolds with nonempty boundary.  

\begin{lemma} \label{magic cover}  Let $\tilde{p} \co \widetilde{M} \to M_2$ be the map produced by Lemma \ref{here's yer cover}, such that $\tilde{p} = p' \circ q = q_{2,1} \circ p$ for a branched cover $q \co \tilde{M} \to M'$ and a cover $p \co \tilde{M} \to M_{4,1}$.  Then $p$ has degree $6$ and $\partial \widetilde{M}$ is connected, with genus $13$.  \end{lemma}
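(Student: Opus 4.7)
The plan is to verify $\deg p = 6$ by a subgroup index calculation, then identify $\partial \widetilde{M}$ as a branched cover of $\partial M'$ to obtain connectedness, with the genus then following from Euler characteristic.

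Set $\pi = \pi_1(\mathcal{E}(q_2^{-1}(T)))$, and let $\Gamma, \Gamma' < \pi$ be the subgroups of indices $2$ and $6$ corresponding to $\mathcal{E}_{2,1}$ (for $q_{2,1}$) and $\mathcal{E}'$ (for $p'$). By Lemma \ref{here's yer cover}, $\widetilde{\mathcal{E}}$ corresponds to $\tilde\Gamma = \Gamma \cap \Gamma'$, so $\deg \tilde p = [\pi:\tilde\Gamma] = 6 \cdot [\Gamma':\tilde\Gamma]$. Since $[\Gamma':\tilde\Gamma]$ divides $[\pi:\Gamma] = 2$, this index is either $6$ or $12$, and equals $12$ precisely when $\Gamma' \not\subseteq \Gamma$; in that case $\deg p = 12/\deg q_{2,1} = 6$ as claimed.

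To establish $\Gamma' \not\subseteq \Gamma$, I would show that each meridian of $q_2^{-1}(T)$ lies in $\Gamma' \setminus \Gamma$. The observation after Lemma \ref{hor sfc} notes that both arcs of $q_2^{-1}(T)$ lie in fibers of the fibration $M_2 \to S^1$. A meridian around such an arc therefore has trivial image under the composite $\pi_1(\mathcal{E}) \to \pi_1(M_2) \to \mathbb{Z}_6$ determined by $p'$, so it lifts to a loop in $M'$ and lies in $\Gamma'$. On the other hand, $q_{2,1}$ branches nontrivially over each arc of $q_2^{-1}(T)$, so meridians generate the quotient $\pi/\Gamma \cong \mathbb{Z}_2$ and do not lie in $\Gamma$.

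For the boundary: since $F$ is a once-holed torus, $\partial M' = \partial F \times S^1$ is a torus. The restriction of $q$ gives a $2$-fold branched cover $\partial \widetilde{M} \to \partial M' \cong T^2$, branched at the endpoints on $\partial M'$ of arcs in $p'^{-1}(q_2^{-1}(T))$. These number $2 \cdot 6 \cdot 2 = 24$: two arcs of $q_2^{-1}(T)$, each lifting to six arcs in $M'$ under the unrolled fibration, each with two endpoints on $\partial M'$. A $2$-fold branched cover of a connected surface with at least one branch point is automatically connected, since branch points have a single preimage; hence $\partial \widetilde{M}$ is connected. Finally, Euler characteristic multiplicativity applied to the unbranched $6$-fold cover $p|_{\partial \widetilde{M}} \co \partial \widetilde{M} \to \partial M_{4,1}$ (where $\partial M_{4,1}$ has genus $3$, so $\chi = -4$) gives $\chi(\partial \widetilde{M}) = -24$ and hence genus $13$.

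The substantive step is the meridian-in-fiber observation, which leans on the explicit fibration of Lemma \ref{hor sfc}; all other verifications are routine bookkeeping.
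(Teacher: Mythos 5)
Your proof is correct. The degree and genus arguments essentially coincide with the paper's: both show $\Gamma' \not\subseteq \Gamma_{2,1}$ by noting that meridians of $q_2^{-1}(T)$ lie in $\Gamma'$ but not in $\Gamma_{2,1}$, and both compute $\chi(\partial\widetilde{M}) = 6\chi(\partial M_{4,1}) = -24$. (One small point: the reason meridians of $q_2^{-1}(T)$ lie in $\Gamma'$ is simply that they bound meridian disks of $\mathcal{N}(q_2^{-1}(T))$ in $M_2$ --- this is the paper's recorded Fact in Section 1 --- and has nothing to do with the arcs lying in fibers; that observation is needed elsewhere.) Your connectedness argument, however, takes a genuinely different and slicker route. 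The paper works algebraically in $\pi(T)$: it identifies the subgroup $\Lambda$ corresponding to the four-holed sphere $\partial B^3 \cap \mathcal{E}(T)$, shows that $\Lambda'$ contains $h^2$ and $(hxhx^{-2})^2$ while $\Gamma_{4,1}$ does not, and concludes that $\widetilde{\Lambda} = \Lambda' \cap \Gamma_{4,1}$ has index two in $\Lambda'$, so that $\partial\mathcal{E}' \cap \partial M'$ has connected preimage in $\widetilde{\mathcal{E}}$. You instead restrict $q$ to a degree-two branched cover $\partial\widetilde{M} \to \partial M' \cong T^2$, branched at the $24$ endpoints of the lifted arcs, and invoke the elementary fact that a degree-two branched cover of a connected surface with a branch point is connected, since the two sheets are exchanged there. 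This is more geometric and self-contained, and Riemann--Hurwitz gives an independent consistency check $\chi(\partial\widetilde{M}) = 2\cdot 0 - 24 = -24$; the trade-off is that your argument is tailored to degree two, whereas the paper's subgroup computation is uniform in the branching degree.
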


\begin{proof}  Recall that $\tilde{p}$ extends the covering map $\tilde{p} \co \widetilde{\mathcal{E}} \to \mathcal{E}_2$, obtained by completing the diamond of covers $q_{2,1} \co \mathcal{E}_{4,1} \to \mathcal{E}_2$ and $p' \co \mathcal{E}' \to \mathcal{E}_2$, after filling along components in $\widetilde{\mathcal{E}}$ of the preimage of meridians of $\mathcal{E}_2$.  Lemma \ref{factors through} implies that the map $\pi_2 \co \pi(T) \onto \mathbb{Z}_2$ takes the elements $h$ and $hxhx^{-2}$ representing the meridians of $T$ to the generator.  Hence each meridian of $T$ has connected preimage, the corresponding meridian in $\mathcal{E}_2$ of $(q_2)^{-1}(T)$, and these are represented in $\Gamma_2 \subset \pi(T)$ by $h^2$ and $(hxhx^{-2})^{2}$, respectively.

Since $p' \co M' \to M_2$ has degree $6$ and $q_{2,1} \co M_{4,1} \to M_2$ has degree $2$, the subgroups $\Gamma'$ and $\Gamma_{2,1}$ corresponding to $\mathcal{E}'$ and $\mathcal{E}_{2,1}$ have indices $6$ and $2$ in $\Gamma_2$, respectively.  Thus the subgroup $\widetilde{\Gamma} = \Gamma' \cap \Gamma_{4,1}$ corresponding to $\widetilde{\mathcal{E}}$ has index two in $\Gamma'$, unless $\Gamma_{4,1}$ contains $\Gamma'$.  But Lemma \ref{M_nkbranching} implies that meridians of $\mathcal{E}(T)$ map to the generator of $\mathbb{Z}_4$ under $\pi_{n,k}$; hence each has connected preimage in $\mathcal{E}_{4,1}$, represented in $\Gamma_{4,1}$ by $h^4$ or $(hxhx^{-2})^4$.  On the other hand, since the components of $(q_2)^{-1}(T)$ in $M_2$ lift to $M'$, the meridians of $(q_2)^{-1}(T)$ do as well.  It follows that $\Gamma'$ contains the elements $h^2$ and $(hxhx^{-2})^2$ and thus contains $\widetilde{\Gamma}$ with index $2$.  Then $[\Gamma_{4,1}:\widetilde{\Gamma}] = 6$, and this is the degree of $p \co \widetilde{M} \to M_{4,1}$.

The subgroup $\Lambda < \pi(T)$, representing $\mathcal{E}(T) \cap \partial B^3$, identified in Lemma \ref{CD stuff} contains the meridian representatives $h$ and $hxhx^{-2}$.  Hence the map $\pi_2$ determining $q_2 \co \mathcal{E}_2 \to \mathcal{E}(T)$ takes $\Lambda$ onto $\mathbb{Z}_2$, so $\mathcal{E}(T) \cap \partial B^3$ has connected preimage in $\mathcal{E}_2$.  This is the four holed subsurface $\partial \mathcal{E}_2 \cap \partial M_2$, and $\partial M_2$ is obtained from it by filling with disks of the form $D^2 \times \{\pm 1\}$ bounding components of $(q_2)^{-1}(\mathcal{N}(T))$.  Since $M'$ is homeomorphic to $F \times S^1$, it has a single boundary component, and since components of $(q_2)^{-1}(T)$ lift to $M'$, their meridians lift to $\mathcal{E}'$.  In particular, the subgroup $\Lambda' < \Gamma'$ corresponding to $\partial \mathcal{E}' \cap \partial M'$ contains the meridian representatives $h^2$ and $(hxhx^{-2})^2$ of $\Gamma_2$.  Since these elements are not contained in $\Gamma_{4,1}$, $\widetilde{\Lambda} = \Lambda' \cap \Gamma_{4,1}$ has index two in $\Lambda'$.  Therefore $\partial \mathcal{E}' \cap \partial M'$ has connected preimage in $\widetilde{\mathcal{E}}$; this is the surface $\widetilde{\mathcal{E}} \cap \partial \widetilde{M}$.  It follows that $\partial \widetilde{M}$ is connected.

The boundary of the tetrus, $\partial M_{4,1}$, has genus $3$, which can be verified by the fact that it is a fourfold branched cover of $\partial B^3$, branched over the four points $T \cap \partial B^3$.  Since $\partial \widetilde{M} = p^{-1}(\partial M_{4,1})$ is a connected sixfold cover, an Euler characteristic calculation shows that it has genus $13$.
\end{proof}

We now turn from consideration of the manifolds-with-boundary $M_{n,k}$ to their doubles, as defined at the beginning of the paper.  It is clear from the definition that a map $f \co M \to N$ between manifolds with boundary determines a map, which we again denote $f \co DM \to DN$, between doubles, and that the map between doubles inherits the property of being a cover or branched cover of degree $n$ from the original map.

\begin{prop} \label{doubled magic cover}  Taking the branched cover $q_{2,1} \co DM_{4,1} \to DM_2$ and the cover $p' \co DM' \to DM_2$ to be determined by the corresponding maps on $M_{4,1}$ and $M'$, respectively, the map supplied by Lemma \ref{here's yer cover} is $\tilde{p} \co D\widetilde{M} \to DM_2$.
\end{prop}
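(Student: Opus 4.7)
The plan is to verify that $D\widetilde{M}$ is the diamond-completion cover produced by Lemma \ref{here's yer cover} applied to the doubled maps $Dq_{2,1}$ and $Dp'$. By that lemma, it suffices to show that the subgroup $\widetilde{\Gamma}^D$ of $\pi_1(\mathcal{E}(L^D))$ corresponding to $D\widetilde{M} \to DM_2$ equals the intersection $\Gamma_{4,1}^D \cap (\Gamma')^D$ of the subgroups corresponding to $DM_{4,1}$ and $DM'$, where $L^D = D(q_2^{-1}(T))$ is the branch locus of $Dq_{2,1}$ in $DM_2$.

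The approach is essentially an index calculation. By functoriality of doubling, $Dq_{2,1}$ and $Dp'$ retain degrees $2$ and $6$ from the original maps, so $\Gamma_{4,1}^D$ and $(\Gamma')^D$ have indices $2$ and $6$ in $\pi_1(\mathcal{E}(L^D))$; consequently $[\pi_1(\mathcal{E}(L^D)) : \Gamma_{4,1}^D \cap (\Gamma')^D] \leq 2\cdot 6 = 12$ by the standard inequality for the intersection of subgroups. Doubling the factorizations of $\tilde{p}$ from Lemma \ref{here's yer cover} gives $D\tilde{p} = Dq_{2,1} \circ Dp = Dp' \circ Dq$, which shows $\widetilde{\Gamma}^D \subseteq \Gamma_{4,1}^D \cap (\Gamma')^D$. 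It then remains to check that $\widetilde{\Gamma}^D$ has index exactly $12$ in $\pi_1(\mathcal{E}(L^D))$.

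For this last step, $\tilde{p} \co \widetilde{M} \to M_2$ has degree $12 = \deg q_{2,1} \cdot \deg p = 2 \cdot 6$ by Lemma \ref{magic cover}, and doubling preserves the degree of a cover provided the doubled total space remains connected. The connectedness of $D\widetilde{M}$ will follow from the connectedness of $\widetilde{M}$ and of $\partial\widetilde{M}$, both established in Lemma \ref{magic cover}. This connectedness point is the only subtle issue I anticipate; the rest of the argument is a matter of degree and index bookkeeping. Once both indices agree at $12$, the containment $\widetilde{\Gamma}^D \subseteq \Gamma_{4,1}^D \cap (\Gamma')^D$ is forced to be equality, identifying $D\widetilde{M}$ with the diamond-completion cover and completing the verification.
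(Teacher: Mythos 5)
Your approach is genuinely different from the paper's. Where the paper works out the algebra explicitly --- invoking the normal form theorem for the amalgamated free product decomposition $\pi(L) \simeq \pi(T) *_{\Lambda} \overline{\pi(T)}$ to show directly that $D\Gamma_{4,1} \cap D\Gamma' = \langle \widetilde{\Gamma}, r(\widetilde{\Gamma}) \rangle$, and identifying this with the subgroup corresponding to $D\widetilde{M}$ --- you try to finesse the identification by a pure index count. However, the count as stated has a genuine gap.

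You have established three things: $\widetilde{\Gamma}^D \subseteq H := \Gamma_{4,1}^D \cap (\Gamma')^D$; $[\pi_1 : \widetilde{\Gamma}^D] = 12$; and $[\pi_1 : H] \leq 12$. But $\leq 12$ is the wrong direction. Since $\widetilde{\Gamma}^D \subseteq H$, the index $[\pi_1:H]$ automatically divides $[\pi_1:\widetilde{\Gamma}^D] = 12$, so the inequality $[\pi_1:H]\leq 12$ is vacuous. What you actually need is a \emph{lower} bound $[\pi_1:H] \geq 12$; the statement ``once both indices agree at $12$'' presumes precisely this, but you have not proved it. Concretely, the general divisibility constraint only gives $\mathrm{lcm}(2,6) = 6$ dividing $[\pi_1:H]$, so $[\pi_1:H]\in\{6,12\}$, and if $[\pi_1:H] = 6$ (i.e., if $(\Gamma')^D \subseteq \Gamma_{4,1}^D$) then your chain of inequalities is consistent with $[H:\widetilde{\Gamma}^D]=2$ and the conclusion fails. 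To close the gap you must rule out $(\Gamma')^D \subseteq \Gamma_{4,1}^D$ --- for instance by observing that $DM'$ is Seifert fibered while $DM_{4,1}$ is hyperbolic so the former cannot cover the latter, or by noting (as in the proof of Lemma~\ref{magic cover}) that the meridian representatives $h^2$ and $(hxhx^{-2})^2$ of $q_2^{-1}(T)$ lie in $\Gamma'$ but not in $\Gamma_{4,1}$, a fact which persists in the doubled picture. With that supplement the index argument would go through, and it is arguably more elementary than the amalgamated-product computation; but without it the proof is incomplete. (A minor further remark: connectedness of $D\widetilde{M}$ needs only $\widetilde{M}$ connected with nonempty boundary, not the connectedness of $\partial\widetilde{M}$ itself, and what connectedness actually buys you is that the covering corresponds to a subgroup at all, not that doubling preserves degree --- the latter holds unconditionally.)
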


The point of this proposition is that Lemma \ref{here's yer cover} is ``doubling equivariant"; that is, applying it and then doubling the resulting diagram of (branched) covers yields the same result as doubling first and then applying it.  It is a consequence of the normal form theorem for free products with amalgamation.

\begin{fact}  Let $A$ and $B$ be groups sharing a subgroup $C$, and let $A *_{C} B$ be the free product of $A$ with $B$, amalgamated over $C$.  If $\pi \co A*_C B \to K$ is an epimorphism to a finite group $K$ such that $\pi(C) = K$, then 
$$\ker \pi = \langle \ker \pi |_A, \ker \pi |_{B} \rangle \simeq (\ker \pi |_A)*_{\ker \pi |_C} (\ker \pi |_{B}).$$  \end{fact}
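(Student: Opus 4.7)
My plan is to construct the natural comparison homomorphism and show it is an isomorphism onto $\ker\pi$. Set $N = \ker\pi$, $N_A = \ker(\pi|_A)$, $N_B = \ker(\pi|_B)$, and $N_C = \ker(\pi|_C)$; inside $G := A *_C B$ one has $A \cap B = C$, so $N_C = N_A \cap N_B$, and moreover $N_A \lhd A$, $N_B \lhd B$, $N_C \lhd C$ as kernels of homomorphisms. The inclusions $N_A \hookrightarrow A \hookrightarrow G$ and $N_B \hookrightarrow B \hookrightarrow G$ agree on $N_C$, so by the universal property of the amalgamated product they induce a homomorphism $\Phi \co N_A *_{N_C} N_B \to G$ whose image lies in $N$. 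The task is then to show $\Phi$ is an isomorphism onto $N$.

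For surjectivity, I will take $w \in N$ and express it as $w = g_1 g_2 \cdots g_n$ with each $g_i$ in $A$ or $B$, alternately. Using the hypothesis $\pi(C) = K$, for each $i$ pick $c_i \in C$ with $\pi(c_i) = \pi(g_i)$; then $n_i := g_i c_i^{-1}$ lies in $N_A$ or in $N_B$ according as $g_i \in A$ or $g_i \in B$. Substituting gives $w = n_1 c_1 n_2 c_2 \cdots n_n c_n$, and I slide each $c_i$ rightward using $c_i n_j = (c_i n_j c_i^{-1}) c_i$. Since $c_i \in C \subset A \cap B$, conjugation by $c_i$ stabilizes both $A$ and $B$; combined with normality this preserves $N_A$ and $N_B$ individually, so the rewritten factors remain in $N_A \cup N_B$. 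Iterating yields $w = n'_1 n'_2 \cdots n'_n \cdot c$ with $n'_i \in N_A \cup N_B$ and $c := c_1 c_2 \cdots c_n \in C$. Applying $\pi$ forces $c \in N \cap C = N_C \subset N_A$, so $w$ is a product of elements of $N_A \cup N_B$ and lies in $\Phi(N_A *_{N_C} N_B)$.

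For injectivity, I will invoke the normal form theorem for amalgamated products. A nontrivial reduced expression in $N_A *_{N_C} N_B$ has the form $h_1 h_2 \cdots h_k$ with $k \geq 1$, the $h_i$ alternately in $N_A$ and $N_B$, and no $h_i$ in $N_C$. Since $N_C = N_A \cap C = N_B \cap C$, the condition $h_i \notin N_C$ forces $h_i \notin C$, so the same word is reduced in the sense of the normal form theorem for $G = A *_C B$ and thus represents a nontrivial element of $G$. Hence $\ker \Phi$ is trivial.

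The main obstacle is the bookkeeping in the surjectivity step: one must verify that each conjugation by $c_i$ keeps the rewritten factor inside $N_A$ or $N_B$, which rests on the fact that $c_i$ lies simultaneously in $A$ and in $B$ and that each of $N_A, N_B$ is normal in its ambient group. The finiteness of $K$ plays no role in the argument; only the surjectivity of $\pi|_C$ is used.
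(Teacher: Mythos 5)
Your proof is correct, and it uses the same two ingredients as the paper's argument (the surjectivity of $\pi|_C$ and the normal form theorem for amalgams), but the mechanics differ in a genuine way. For the generation statement, the paper fixes coset representatives, puts $g \in \ker\pi$ into normal form $c s_1\cdots s_n$, and inducts on the syllable length $n$: it inserts a single $c_0^{-1}c_0$ with $\pi(c_0)=\pi(cs_1\cdots s_{n-1})$ to split $g$ into two shorter kernel elements. You instead work with an arbitrary alternating expression $g_1\cdots g_n$, correct \emph{every} syllable at once by a $c_i\in C$ with $\pi(c_i)=\pi(g_i)$, and slide the accumulated $C$-factors to the right using the fact that conjugation by $C$ preserves $\ker\pi|_A$ and $\ker\pi|_B$; the leftover product of $c_i$'s lands in $\ker\pi|_C$. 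This avoids the induction and the choice of coset representatives at the cost of the conjugation bookkeeping. For the isomorphism statement you are more explicit than the paper: you build the comparison map $\Phi$ from the universal property and check injectivity by observing that a word reduced over $N_C$ is automatically reduced over $C$ (since $N_C=N_A\cap C=N_B\cap C$), whereas the paper only notes that $\ker\pi|_A\cap\ker\pi|_B=\ker\pi|_C$ and cites the normal form theorem. (One microscopic omission in your injectivity step: a nontrivial element of $N_C$ itself is not covered by your "no $h_i\in N_C$" case, but it maps to a nontrivial element of $C\subset A*_CB$, so nothing is lost.) Your closing remark that finiteness of $K$ is never used is accurate and applies to the paper's proof as well.
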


\begin{proof}[Proof of Fact]  It is clear that $\langle \ker \pi |_A, \ker \pi |_B \rangle$ is contained in $\ker \pi$.  We claim equality; this follows from the normal form theorem for free products with amalgamation (see \cite[Ch. IV, \S 2]{LS}).  Fix sets $S_A$ and $S_B$ of right coset representatives for $C$ in $A$ and $B$, respectively.  Then the normal form theorem asserts that each $g \in A*_C B - \{1\}$ has a \textit{normal form}, which is a unique expression $ g = c s_1 s_2 \cdots s_n$ 
for some $n \geq 1$, where $c \in C$ and each $s_i$ is in $S_A$ or $S_B$, with $s_i \in S_A$ if and only if $s_{i+1} \in S_B$.  We will call $n$ the \textit{length} of $g$, and note that the claim is immediate if $g$ has length $1$.

If $g \in \ker \pi$ has length $n > 1$, then write $g$ in normal form as above, and let $c_0 \in C$ have the property that $\pi(c_0) = \pi(c s_1 \cdots s_{n-1})$.  Taking $g_0 = c s_1 \cdots s_{n-1} c_0^{-1}$
and $g_1 = c_0 g_n$, we may write $g = g_0 g_1$ as a product of words in $\ker \pi$.  It is evident that $g_1$ has length $1$ and easily proved, by passing elements of $C$ to the left, that $g_0$ has length at most $n-1$.  Then by induction, each of $g_0$ and $g_1$ is in $\langle \ker \pi |_A, \ker \pi |_{B} \rangle$, so $g$ is as well.

The normal form theorem also implies that the naturally embedded subgroups $A$ and $B$ in $A*_C B$ intersect in $C$.  Therefore $\ker \pi |_A \cap \ker \pi |_{B} = \ker \pi |_C$, and it follows again from the normal form theorem that the inclusion-induced map $(\ker \pi |_A)*_{\ker \pi |_C} (\ker \pi |_{B}) \to \langle \ker \pi |_A, \ker \pi |_{B} \rangle$ is an isomorphism.  \end{proof}

\begin{proof}[Proof of Proposition \ref{doubled magic cover}]  Recall that we have identified the exterior of $T$, $\mathcal{E}(T)$, with $\overline{B^3 - \mathcal{N}(T)}$, where $\mathcal{N}(T)$ is a regular neighborhood of $T$ in $B^3$, with two components homeomorphic to $D^2 \times I$.  The double of $B^3$ is homeomorphic to $S^3$, $T$ doubles yielding the link $L$ of Figure \ref{doubledtangle}, and $\mathcal{N}(T)$ doubles yielding a regular neighborhood $\mathcal{N}(L)$.  Each component of $\mathcal{N}(L)$ is homeomorphic to $D^2 \times S^1$, with the corresponding component of $\mathcal{N}(T)$, homeomorphic to $D^2 \times I$, mapping in by $(x,y) \mapsto (x,e^{\pi i y})$ and its mirror image by $(x,y) \mapsto (x,e^{-\pi i y})$.  Using this description of $\mathcal{N}(L)$, the link exterior $\mathcal{E}(L)$ is the double of $\mathcal{E}(T)$ across the subsurface $\partial B^3 \cap \mathcal{E}(T)$ of $\partial \mathcal{E}(T)$, and meridians of $T$ in $\mathcal{E}(T)$ are meridians of $L$ in $\mathcal{E}(L)$.  

Using the above description, van Kampen's theorem describes $\pi(L)$ as a free product with amalgamation, $\pi(L) \simeq \pi(T) *_{\Lambda}\overline{\pi(T)}$, across the subgroup $\Lambda$ from  Lemma \ref{CD stuff}, which corresponds to $\partial B^3 \cap \mathcal{E}(T)$.  Here $\overline{\pi(T)} = \{ \bar{g}\,|\, g \in \pi(T) \}$ is $\pi_1(\overline{\mathcal{E}(T)})$, isomorphic to $\pi_T$.  There is a ``doubling involution" $r$ on $\pi(L)$ determined by $r(g) = \bar{g}$, $g \in \pi(T)$.  This is induced by the doubling involution exchanging $\mathcal{E}(T)$ with $\overline{\mathcal{E}(T)}$ in $\mathcal{E}(L)$, fixing their intersection $\partial B^3 \cap \partial \mathcal{E}(T)$.  

The projections $\pi_{n,k} \co \pi(T) \to \mathbb{Z}_n$  (respectively, $\pi_2 \co \pi(T) \to \mathbb{Z}_2$ )defined in Lemma \ref{M_nkbranching} (resp. Lemma \ref{factors through}) uniquely determine corresponding projections on $\pi(L)$ with the property that $\pi_{n,k} \circ r = \pi_{n,k}$ (resp. $\pi_2 \circ r = \pi_2$).  Since $\Lambda$ contains the meridian representatives $h$ and $hxhx^{-2}$, each such projection maps it onto the image of $\pi(L)$.  Then by the fact above, we have 
$$ D\Gamma_{n,k} \doteq \ker \pi_{n,k} = \langle \Gamma_{n,k}, \overline{\Gamma}_{n,k} \rangle \simeq \Gamma_{n,k} *_{\Lambda_{n,k}} \overline{\Gamma}_{n,k}, $$
where $\Lambda_{n,k} \doteq \ker \pi_{n,k} |_{\Lambda}$.  (The corresponding fact holds for $D\Gamma_2 \doteq \ker \pi_2$.)  

We now define $D\mathcal{E}_{n,k}$ (respectively, $D\mathcal{E}_2$) to be the double of $\mathcal{E}_{n,k}$ (resp. $\mathcal{E}_2$) across the subsurface $\partial \mathcal{E}_{n,k} \cap \partial M_{n,k} \subset \partial \mathcal{E}_{n,k}$ (resp. $\partial \mathcal{E}_2 \cap \partial M_2$).  This notation is somewhat abusive, since this subsurface does not occupy all of $\partial \mathcal{E}_{n,k}$, but we note that it is represented in $\Gamma_{n,k}$ by $\Lambda_{n,k}$, since its complement is $(q_{n,k})^{-1}(\partial \mathcal{N}(T) \cap \mathcal{E}(T))$.  Then $D\mathcal{E}_{n,k}$ (respectively, $D\mathcal{E}_2$) covers $\mathcal{E}(L)$, and the description above makes clear that this is the cover corresponding to $D\Gamma_{n,k}$ (resp. $D\Gamma_2$).  Hence filling $D\mathcal{E}_{n,k}$ (resp. $D\mathcal{E}_2$) along preimages of meridians of $L$ yields the branched cover $DM_{n,k}$ (resp. $DM_2$) defined in the statement of the proposition.  

We make a similar claim regarding $p' \co DM' \to DM_2$.  The cover $p' \co \mathcal{E}' \to \mathcal{E}_2$ is regular, corresponding to the kernel of a map $\pi' \co \Gamma_2 \to \mathbb{Z}_6$, and since $\partial \mathcal{E}' \cap \partial M'$ is connected, it corresponds to a subgroup $\Lambda' = \ker \pi'|_{\Lambda_2}$ of index $6$ in $\Lambda_2$.  Then defining $\pi' \co \Gamma_2 *_{\Lambda_2} \overline{\Gamma}_2 \to \mathbb{Z}_6$ by requiring $\pi' \circ r = \pi'$, we argue as above to show that $p' \co DM' \to DM_2$ is obtained by filling the cover of $D\mathcal{E}_2$ corresponding to $\ker \pi'$ along preimages of meridians.  

We recall from the proof of Lemma \ref{magic cover} that $\widetilde{\Gamma} = \Gamma_{4,1} \cap \Gamma'$ has index $2$ in $\Gamma'$, and that $\widetilde{\Lambda} = \Gamma_{4,1} \cap \Lambda'$ has index $2$ in $\Lambda'$.  Then it follows as above that the subgroup $D\widetilde{\Gamma} \doteq D\Gamma_{4,1} \cap D\Gamma' = \langle \widetilde{\Gamma}, r(\widetilde{\Gamma}) \rangle$, and that it is isomorphic to the free product of $\widetilde{\Gamma}$ with itself, amalgamated across $\widetilde{\Lambda}$.  The proposition follows.
\end{proof}

Proposition \ref{doubled magic cover} supplies a diamond of maps to which Proposition \ref{technical_branched} may be applied.  We thus prove Theorem \ref{fibered} below by using Proposition \ref{transverse} to find a fibering of $M'$ transverse to the preimage of $L$.

\begin{proof}[Proof of Theorem \ref{fibered}]  By Lemma \ref{hor sfc}, $M_2$ is homeomorphic to a bundle over $S^1$ with fiber the surface $F$ depicted in Figure \ref{horsfc} and monodromy $\sigma \co F \to F$.  The fibers of $M_2$ join to yield a fibering of $DM'$ with fiber surface $DF$, the double of $F$, and monodromy map which we will call $D\sigma$.  This is pictured in Figure \ref{doubledsfc}.  Here the hexagons and squares to the left of the vertical line are fitted together along labeled arcs as in Figure \ref{horsfc} forming a copy of $F$, and the hexagons and squares to the right of the vertical line are fitted together along their labeled edges forming a copy of $\overline{F}$.  To form $DF$, each unlabeled edge is identified with its correspondent by reflection through the vertical line.  $D\sigma$ is the map that restricts on $F$ to $\sigma$ and is equivariant with respect to the doubling involution, hence is itself of order $6$.  

Recall that $p' \co M' \to M_2$, defined below Lemma \ref{hor sfc}, is the sixfold cover of $M_2$ which is  trivially fibered over $S^1$ with fiber $F$.  Since $DM'$ is the double of $M'$, it is trivially fibered over $S^1$ with fiber $DF$.  Using Lemma \ref{hor sfc}, we may identify $DM_2$ with $DF \times I/((x,0)\sim (D\sigma(x),1))$.  Then identifying $DM'$ with $DF \times I/((x,0) \sim (x,1))$, the covering map is given by $(x,y) \mapsto ((D\sigma)^k(x),6(y-\frac{k}{6}))$ for $ y \in [\frac{k}{6},\frac{k+1}{6}]$, $0 \leq k < 6$.  Since the components of $(q_2)^{-1}(T)$ lie in disjoint copies of the fiber surface $F$ for $M_2$,  the components of $q_2^{-1}(L)$ lie in disjoint copies of $DF \subset M$.  Take $\pi \co DM' \rightarrow DF$ to be projection onto the first factor.  Then by the description above, if $\lambda$ is a component of $(q_2\circ p')^{-1}(L) \subset M'$, $\pi(\lambda)$ is a simple closed curve on $DF$ and $D\sigma$ takes $\pi(\lambda)$ to $\pi(\lambda')$, where $\lambda'$ is another component of $(q_2\circ p')^{-1}(L)$.

To understand $(q_2 \circ p')^{-1}(L)$, we first describe $(q_2)^{-1}(T)$ in $M_2$.  This the fixed set of the involution which restricts on each of $V_{1/3}$ and $V_{1/2}$ to the complex conjugation-induced involution.  The fixed arc $[1,1] \times \{1\} \subset V_{1/3}$ lies in the disk $H_0$, running from the midpoint of the side labeled $0$ to the midpoint of the opposite, unlabeled side in Figure \ref{horsfc}.  The other arc $[-1,1]\times \{-1\} \subset V_{1/3}$ runs from the midpoint of the side of $H_1$ labeled $1/2$ to the midpoint of the opposite side.  The arc $[-1,1] \times \{1\} \subset V_{1/2}$ lies in $S_0$, joining the midpoints of the sides labeled $0$ and $1/2$.  Thus the union of these three arcs is an arc properly embedded in $M_2$, comprising one component of the preimage of $T$.  

The other component of $q_2^{-1}(T)$ is the arc $[-1,1] \times \{-1\} \subset V_{1/2}$, with endpoints in $B_{1/2} \subset \partial M_2$.  This lies in the disk $D^2 \times \{-1\}$ in $V_{1/2}$, midway between $S_1$ and $\sigma(S_1) = S_2$.  Thus using the description from Lemma \ref{hor sfc} of $M_2$ as $F \times I/(x,0) \sim (\sigma(x),1)$, the second arc of $q_2^{-1}(T)$ lies in $S_1 \times \{1/2\}$, joining midpoints of the unlabeled boundary components.  

Now in $DM_2$, $(q_2)^{-1}(L)$ is the double of $(q_2)^{-1}(T)$, with one component in $DF \times \{0\}$ and one in $DF \times \{1/2\}$.  Then $(q_2 \circ p')^{-1}(L) \subset DM'$ has twelve components in disjoint copies of $DF$.  This set is depicted by the dashed arcs in Figure \ref{doubledsfc}.  If $\alpha$ is such an arc, a simple closed curve on $DF$ containing $\alpha$ may be obtained by taking the union of a collection of arcs $\mathcal{A}$, of minimal cardinality such that $\alpha \in \mathcal{A}$ and for each $\beta \in \mathcal{A}$, the arcs meeting $\beta$ at its endpoints are also in $\mathcal{A}$.  Inspection reveals six such simple closed curves, permuted by $D\sigma$, each of the form $\pi(\lambda)$ for some component $\lambda$ of $(q_2 \circ p')^{-1}(L)$.  There are six, rather than twelve, because $(D\sigma)^{3}$ is the hyperelliptic involution of $DF$, which preserves simple closed curves.

\begin{figure}%[ht]
\begin{center}
\input{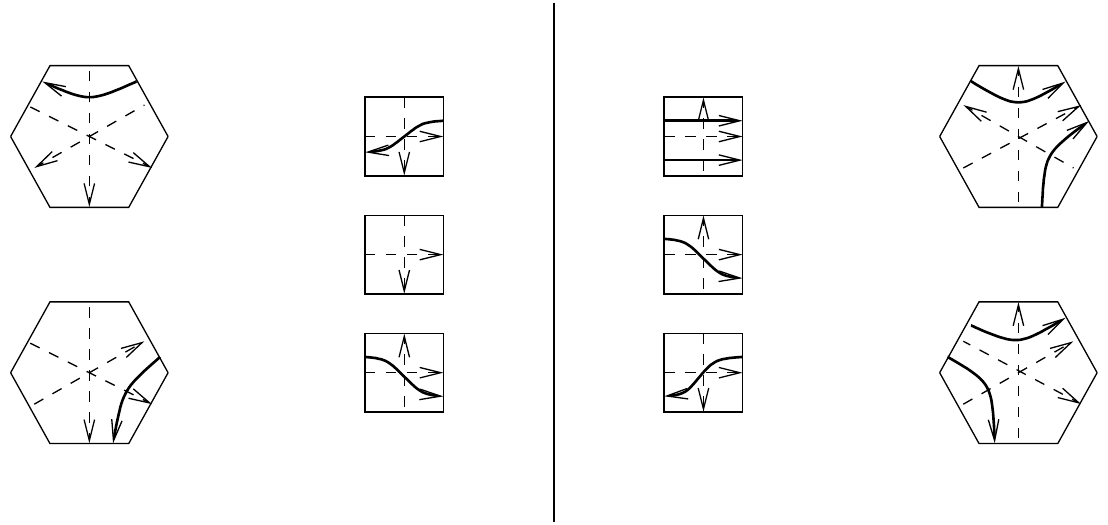_t.txt}
\end{center}
\caption{The collection of transverse curves in $DF$}
\label{doubledsfc}
\end{figure}

The bold arcs depicted in Figure \ref{doubledsfc} join to produce two disjoint simple closed curves in $DF$ transverse to this collection.  We have indicated orientations with arrows so that, giving $DF$ the standard orientation from the page, the algebraic and geometric intersection numbers of each bold curve with each dashed curve coincide.  Thus by Proposition \ref{transverse}, spinning annuli along the bold curves yields a fibration of $DM'$ transverse to $(q_2 \circ p')^{-1}(L)$.  Then by Proposition \ref{technical_branched}, $D\widetilde{M}$ is fibered, and the branched cover $q \co D\widetilde{M} \to DM'$ takes fibers to fibers.  

If $\widetilde{F}$ is the fiber surface of $D\widetilde{M}$, then $q$ restricts on $\widetilde{F}$ to a twofold branched cover of a fiber surface of $DM'$ transverse to $(q_2 \circ p')^{-1}(L)$, branched over their points of intersection.  From Figure \ref{doubledsfc}, we find $16$ points of intersection between the arcs of $\pi((q_2 \circ p')^{-1}(L))$ and the bold arcs which determine the spinning curves.  It follows that a spun fiber surface has $32$ points of intersection with $(q_2 \circ p')^{-1}(L)$, since $\pi$ maps these curves two-to-one.  Since the spun fiber surface of $DM'$ has genus two, an Euler characteristic calculation shows that $\widetilde{F}$ has genus $19$.  Together with the information in Lemma \ref{magic cover} and Proposition \ref{doubled magic cover}, this establishes the theorem.  \end{proof}

%%%%%%%%%%%%%%%%%%%%%%%%%%%%

\bibliographystyle{plain}
\bibliography{fibering}

\begin{thebibliography}{10}

\bibitem{AN}
M.~Abert and N.~Nikolov.
\newblock Rank gradient, cost of groups, and the rank versus {H}eegaard genus
  problem.
\newblock arXiv:math/0701361, version 1.

\bibitem{ABZ}
I.~Agol, S.~Boyer, and X.~Zhang.
\newblock Virtually fibred {M}ontesinos links.
\newblock Preprint, 2007.

\bibitem{Agol}
Ian Agol.
\newblock Criteria for virtual fibering.
\newblock {\em J. Topol.}, 1(2):269--284, 2008.

\bibitem{AR}
I.~R. Aitchison and J.~H. Rubinstein.
\newblock Polyhedral metrics and {$3$}-manifolds which are virtual bundles.
\newblock {\em Bull. London Math. Soc.}, 31(1):90--96, 1999.

\bibitem{BDT}
David Bachman and Ryan Derby-Talbot.
\newblock Degeneration of {H}eegaard genus, a survey.
\newblock In {\em Workshop on {H}eegaard {S}plittings}, volume~12 of {\em Geom.
  Topol. Monogr.}, pages 1--15. Geom. Topol. Publ., Coventry, 2007.

\bibitem{Biri}
Ian Biringer.
\newblock Geometry and rank of fibered hyperbolic {$3$}-manifolds.
\newblock Preprint, 2008.

\bibitem{BuZi}
Gerhard Burde and Heiner Zieschang.
\newblock {\em Knots}, volume~5 of {\em de Gruyter Studies in Mathematics}.
\newblock Walter de Gruyter \& Co., Berlin, second edition, 2003.

\bibitem{CD}
E.~Chesebro and J.~De{B}lois.
\newblock Trace fields and commensurability of link complements.
\newblock Preprint. arXiv:0708.1184, August 2007.

\bibitem{Conway}
J.~H. Conway.
\newblock An enumeration of knots and links, and some of their algebraic
  properties.
\newblock In {\em Computational {P}roblems in {A}bstract {A}lgebra ({P}roc.
  {C}onf., {O}xford, 1967)}, pages 329--358. Pergamon, Oxford, 1970.

\bibitem{Jaco}
William Jaco.
\newblock Surfaces embedded in {$M\sp{2}\times S\sp{1}$}.
\newblock {\em Canad. J. Math.}, 22:553--568, 1970.

\bibitem{LLR}
D.~D. Long, A.~Lubotzky, and A.~W. Reid.
\newblock Heegaard genus and property {$\tau$} for hyperbolic 3-manifolds.
\newblock {\em J. Topol.}, 1(1):152--158, 2008.

\bibitem{LS}
Roger~C. Lyndon and Paul~E. Schupp.
\newblock {\em Combinatorial group theory}.
\newblock Springer-Verlag, Berlin, 1977.
\newblock Ergebnisse der Mathematik und ihrer Grenzgebiete, Band 89.

\bibitem{Mont}
Jos{\'e}~M. Montesinos.
\newblock Seifert manifolds that are ramified two-sheeted cyclic coverings.
\newblock {\em Bol. Soc. Mat. Mexicana (2)}, 18:1--32, 1973.

\bibitem{ZP}
Luisa Paoluzzi and Bruno Zimmermann.
\newblock On a class of hyperbolic {$3$}-manifolds and groups with one defining
  relation.
\newblock {\em Geom. Dedicata}, 60(2):113--123, 1996.

\bibitem{Souto}
Juan Souto.
\newblock Geometry, {H}eegaard splittings and rank of the fundamental group of
  hyperbolic 3-manifolds.
\newblock In {\em Workshop on {H}eegaard {S}plittings}, volume~12 of {\em Geom.
  Topol. Monogr.}, pages 351--399. Geom. Topol. Publ., Coventry, 2007.

\bibitem{Sul}
Dennis Sullivan.
\newblock Travaux de {T}hurston sur les groupes quasi-fuchsiens et les
  vari\'et\'es hyperboliques de dimension {$3$} fibr\'ees sur {$S\sp{1}$}.
\newblock In {\em Bourbaki Seminar, Vol. 1979/80}, volume 842 of {\em Lecture
  Notes in Math.}, pages 196--214. Springer, Berlin, 1981.

\bibitem{Th1}
William~P. Thurston.
\newblock The geometry and topology of 3-manifolds.
\newblock mimeographed lecture notes, 1979.

\bibitem{Ush}
Akira Ushijima.
\newblock The canonical decompositions of some family of compact orientable
  hyperbolic {$3$}-manifolds with totally geodesic boundary.
\newblock {\em Geom. Dedicata}, 78(1):21--47, 1999.

\end{thebibliography}

\end{document}